\theoremstyle{definition}
\newtheorem{definition}{Definition}
\newtheorem{example}{Example}
\newtheorem{corollary}{Corollary}
\newtheorem{lemma}{Lemma}
\newtheorem{theorem}{Theorem}
\newtheorem{remark}{Remark}
\definecolor{blue1}{RGB}{0, 0, 130}
\definecolor{blue2}{RGB}{0, 0, 200}
\definecolor{red2}{RGB}{180,0,0}
\definecolor{purp1}{RGB}{200,50,0}
\definecolor{green1}{RGB}{0,130,0}
\definecolor{lightgray}{RGB}{190,190,190}
\definecolor{mydarkblue}{rgb}{0,0.08,0.45}
\definecolor{mylessdarkblue}{rgb}{0,0.28,0.75}
\newcommand{\qrom}{{(\roman*)}}
\newcommand{\qalph}{{(\alph*)}}
\newcommand{\vocab}[1]{{\color{blue2}\textit{{#1}}}}
\renewcommand{\vocab}[1]{{\emph{{#1}}}}
\newcommand{\paren}[1]{\left({#1}\right)}
\newcommand{\parenl}[1]{\left|{#1}\right|}
\newcommand{\RR}{\mathbb{R}}
\newcommand{\disp}{\iftoggle{full}{\displaystyle}{}}
\newcommand{\eeps}{\epsilon}
\newcommand{\To}{\Rightarrow}
\newcommand{\From}{\Leftarrow}
\newcommand{\Iff}{\Longleftrightarrow}
\newcommand{\dsfrac}[2]{{\disp\frac{\disp{{#1}}}{\disp{{#2}}}}}
\newcommand{\norm}[1]{\|\myvec{#1}\|}
\newcolumntype{L}[1]{>{\raggedright\arraybackslash}p{#1}}
\newcolumntype{C}[1]{>{\centering\arraybackslash}p{#1}}
\newcolumntype{R}[1]{>{\raggedleft\arraybackslash}p{#1}}
\renewcommand{\iff}{\Leftrightarrow}
\newcommand{\normm}{\norm{\cdot}}
\newcommand\backnot{\mathrel{\mathpalette\back@not\relax}}
\newcommand\back@not[2]{%
  \makebox[0pt][l]{%
    \sbox\z@{$\m@th#1=$}%
    \kern\wd\z@
    \reflectbox{$\m@th#1\not$}%
  }%
}
\renewcommand{\norm}[1]{\left|\left|{#1}\right|\right|}
\newcommand{\sett}[1]{\left\{#1\right\}}
\newcommand{\ccal}{\mathcal}
\renewcommand{\sett}[1]{\left\{#1\right\}}
\renewcommand{\leq}{\leqslant}
\renewcommand{\geq}{\geqslant}
\newcommand{\KK}{\mathbb{K}}
\title{Reformulating $\epsilon$-$\delta$ Limits in a Pedagogically Cleaner Way}
\author{Joel Q. L. Chang}
\date{}
\begin{document}
\maketitle

\begin{abstract}
We provide a simple reformulation of the $\epsilon$-$\delta$ limit definition introduced in undergraduate calculus courses that enhances its pedagogical value for conceptual understanding and computational skill.
\end{abstract}

\section{Introduction}

During the advent of calculus, the renowned scientists Newton and Leibniz independently developed differential and integral calculus using infinitesimals \cite{kutateladze2007excursus}.
These notions of infinitesimals were logically questionable until the 20th century, when mathematicians such as Cauchy and Weierstrass  \cite{kutateladze2007excursus} formulated the $\epsilon$-$\delta$ limit definition as follows.
\begin{definition}\label{def: vanilla_epsilon_delta}
	Let $c \in \RR$ and $A \subseteq \RR$ be a subset. We say that a function $f : A \to \RR$ has a \vocab{limit} $L$ at $c$, denoted $\lim_{x \to c}f(x) = L$, precisely when
	$$\forall \eeps > 0 \quad \exists \delta > 0 : \quad 0 < |x-c| < \delta \quad \To \quad |f(x) - L| < \eeps.$$
\end{definition}
Many introductory calculus courses include this classical $\epsilon$-$\delta$ limit definition as an introduction to undergraduate real analysis or as a means to rigorously discuss the concept of limits that are used to define continuity and differentiability \cite{Katz_2017}.

However, the $\epsilon$-$\delta$ limit definition has its fair share of critics citing poor pedagogical value. Furthermore, the notion of ``reverse-engineering'' the choice of $\delta$ given the goal of upper-bounding the output error $|f(x)-L|$ seems counterintuitive to many undergraduates.

In this paper, we use the affine transformation $x = c+t$ to reformulate the $\epsilon$-$\delta$ definition of the limit, and demonstrate its pedagogical value for proof-writing students. To our knowledge, we are not aware of any other source that adopts this reformulation, due to the pervasive adoption of Definition~\ref{def: vanilla_epsilon_delta}.

\section{The Reformulation}

We first state the definition of the special case $\lim_{t \to 0}f(t) = 0$:
\begin{definition}\label{def: zero_limit}
	Let $f$ be a real-valued function. Then
	$$\lim_{t \to 0} f(t) = 0\quad \Iff \quad \forall \eeps > 0 \quad \exists \delta > 0: \quad 0 < |t| < \delta \quad \To \quad |f(t)| < \eeps.$$
	We call $\delta$ the \vocab{input threshold} and $\eeps$ the \vocab{output threshold}. Furthermore,
	$$\lim_{t \to 0} f(t) = 0 \quad \Iff \quad \lim_{t \to 0} |f(t)| = 0$$
	since $||x|| = |x|$ for any $x \in \RR$. The latter form is useful when we want to compute limits in terms of upper-bounding. We illustrate some basic examples in our \href{https://youtu.be/103Qbhu0QlA}{YouTube video}.
\end{definition}
We can then formulate the usual definition of the limit in terms of this special case.
\begin{theorem}\label{thm: novel_epsilon_delta}
	Let $c \in \RR$, $A \subseteq \RR$, and $f : A \to \RR$ be a real-valued function. Then
	$$\lim_{x \to c} f(x) = L \quad \Iff \quad \lim_{t \to 0} (f(c+t)-L) = 0.$$
	Alternately phrased using the fizzle function vocabulary,
	$$\lim_{x \to c} f(x) = L \quad \Iff \quad \mbox{$f(c+\cdot)-L$ is a fizzle function}.$$
\end{theorem}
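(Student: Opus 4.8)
The plan is to prove both implications by the single change of variables $t = x - c$, showing that the two $\epsilon$-$\delta$ statements are literally the same string of quantifiers once this substitution is made. First I would unfold the right-hand side: by Definition~\ref{def: zero_limit} applied to the function $g(t) := f(c+t) - L$ (whose domain is the translated set $A - c = \{a - c : a \in A\}$), the assertion $\lim_{t \to 0}(f(c+t) - L) = 0$ means
$$\forall \eeps > 0 \quad \exists \delta > 0 : \quad 0 < |t| < \delta \quad \To \quad |f(c+t) - L| < \eeps.$$
I would then record the elementary observation that $t \mapsto x = c + t$ is a bijection of $\RR$ carrying the punctured interval $0 < |t| < \delta$ exactly onto $0 < |x - c| < \delta$, and that under it one has the identity $|f(c+t) - L| = |f(x) - L|$.

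For the forward implication, assume $\lim_{x \to c} f(x) = L$ and fix $\eeps > 0$. Definition~\ref{def: vanilla_epsilon_delta} hands us a $\delta > 0$ with $0 < |x - c| < \delta \To |f(x) - L| < \eeps$, and I claim the \emph{same} $\delta$ certifies the limit for $g$: if $0 < |t| < \delta$, set $x = c + t$, so $0 < |x - c| < \delta$ and hence $|g(t)| = |f(x) - L| < \eeps$. The reverse implication is the mirror image — given $\eeps > 0$, take the $\delta$ supplied for $g$, and for any $x$ with $0 < |x - c| < \delta$ put $t = x - c$ to get $|f(x) - L| = |g(t)| < \eeps$. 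The closing ``fizzle function'' rephrasing is then immediate, since by definition ``$h$ is a fizzle function'' abbreviates $\lim_{t \to 0} h(t) = 0$, here with $h = f(c + \cdot) - L$.

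I do not expect a genuine obstacle: the entire content is the substitution $x = c + t$, and the two definitions were deliberately phrased so that the quantifier blocks coincide. The only point requiring a word of care is the domain bookkeeping — one should note that $f(c+t)$ is evaluated only for those $t$ with $c + t \in A$, so that $g$ is indeed a function on $A - c$, and that the point $0$ now plays the role formerly played by $c$, the punctured-neighbourhood conditions matching because translation preserves them. I would also remark explicitly that neither definition assumes $c$ is a limit point of $A$, so the equivalence holds verbatim in every case, including the degenerate ones where both sides are vacuously true.
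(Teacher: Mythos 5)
Your proof is correct and follows the same route as the paper's: the substitution $x = c+t$ identifies the two quantifier blocks term by term, since $0 < |x-c| < \delta \Iff 0 < |t| < \delta$ and $|f(x)-L| = |f(c+t)-L|$. Your version merely spells out both directions and the domain bookkeeping that the paper's one-line proof leaves implicit, so there is nothing to change.
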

\begin{proof}[Proof of Theorem~\ref{thm: novel_epsilon_delta}]
	The substitution $x=c+t$ ensures
	$$0 < |x-c| < \delta \quad \Iff \quad 0 < |t| < \delta$$
	and
	$$|f(x)-L| < \eeps \quad \Iff \quad |f(c+t) - L| < \eeps.$$
\end{proof}
In the special case $c = 0$, there is no difference between the two definitions apart from a relabeling $x \to t$. Furthermore, any limit proved using Definition~\ref{def: vanilla_epsilon_delta} can in principle be proved via the equivalent formulation in Theorem~\ref{thm: novel_epsilon_delta} via the transformation $x=c+t$.

We therefore emphasise that the main advantage of this reformulation does not arise in mathematical novelty, but in pedagogical insight for students. This allows the heavy lifting of the theoretical $\eeps$-$\delta$ proofs to be carried out on the special case (Definition~\ref{def: zero_limit}), and derivation of the general limit theorems in terms of fizzle functions (see Section~\ref{sec: limit_theorems}).

Analytically, the transformation to $0 < |t| < \delta$ emphasises to the student that to prove $\lim_{t \to 0} f(t) = 0$, their two-fold goal  for any $\eeps > 0$ is to
\begin{enumerate}[label=\qalph]
	\item upper-bound the input error $|t|$ by their judiciously chosen input threshold $\delta$, so that they can
	\item upper-bound the output error $|f(t) - L|$ by the given output threshold $\epsilon$.
\end{enumerate}
And finally, to prove general statements of the form $\lim_{x \to c}f(x) = L$, they simply need to prove
$$\lim_{t \to 0} (f(c+t) - L) = 0$$
by Theorem~\ref{thm: novel_epsilon_delta}, which is familiar to them.
By letting the right-hand side of the limit to equal $0$, we have the added advantage of emphasising the goal of upper-bounding the error. Algebraically, this converts a problem involving factorisation into a problem involving expansion, which is easier though at the cost of tediousness. Furthermore, this notion is used to define asymptotes, which generalises the notion of
$$\lim_{x \to c} f(c) = L$$
when the right-hand side $L$ is not a constant.
\section{Common Pedogogical Examples}
To illustrate the point, we highlight several common examples of limits that students are taught to prove and demonstrate the veracity of the new formulation of the $\epsilon$-$\delta$ approach.
\begin{theorem}\label{thm: polynomial_cty}
	Let $p(x) = \sum_{k=0}^n a_k x^k$ be a nonzero polynomial. Then for any $c \in \RR$, $\lim_{x \to c}p(x) = p(c)$.
\end{theorem}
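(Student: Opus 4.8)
The plan is to reduce immediately to the fizzle-function formulation via Theorem~\ref{thm: novel_epsilon_delta}: since $\lim_{x\to c} p(x) = p(c)$ is equivalent to $\lim_{t\to 0}\big(p(c+t) - p(c)\big) = 0$, it suffices to show that $q(t) := p(c+t) - p(c)$ is a fizzle function. The key structural observation is that $q$ is itself a polynomial in $t$ with zero constant term: expanding each $a_k (c+t)^k$ by the binomial theorem and collecting powers of $t$ gives $q(t) = \sum_{j=1}^{n} b_j t^j$ with $b_j = \sum_{k=j}^n a_k \binom{k}{j} c^{k-j}$, while the $j=0$ contribution $\sum_k a_k c^k = p(c)$ cancels by construction. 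So the theorem follows once we know that every polynomial in $t$ with vanishing constant term fizzles.

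To prove that, fix $\eeps > 0$ and restrict attention to the input region $0 < |t| < 1$, on which $|t|^j \le |t|$ for all $j \ge 1$; hence $|q(t)| \le \sum_{j=1}^n |b_j|\,|t| =: M |t|$. Choosing the input threshold $\delta := \min\{1,\ \eeps/(M+1)\}$ — the $+1$ guarding against the degenerate case $M = 0$, i.e. $p$ constant — we get $0 < |t| < \delta \To |q(t)| \le M|t| < M \cdot \eeps/(M+1) < \eeps$, which is exactly the implication required by Definition~\ref{def: zero_limit}.

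The only genuine obstacle here is bookkeeping, not mathematics: one must verify that the constant term of $p(c+t)$ really is $p(c)$ so that $q$ has no constant term — without this the argument collapses, since a nonzero constant cannot be forced below an arbitrary $\eeps$ — and one should dispatch the edge case $M = 0$ cleanly. Note that the hypothesis that $p$ is nonzero plays no role; the statement and proof go through verbatim for the zero polynomial. If one prefers to avoid the explicit binomial expansion, an alternative is induction on $n$ using the sum and product limit theorems for fizzle functions from Section~\ref{sec: limit_theorems} (writing $p(x) = a_n x^n + \cdots + a_0$ and noting that each $x^k = (c+t)^k \to c^k$), but the direct expansion is the most self-contained route and best illustrates the ``expansion instead of factorisation'' point made in Section~\ref{sec: limit_theorems}.
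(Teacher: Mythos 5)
Your proof is correct and takes essentially the same route as the paper's: reduce via Theorem~\ref{thm: novel_epsilon_delta}, expand $p(c+t)-p(c)$ by the binomial theorem into $\sum_{j=1}^n b_j t^j$ with the same coefficients $b_j$, and bound using $|t|<1$ with $\delta = \min\{1,\ \eeps/(M+1)\}$, which is the paper's second choice of $\square$. Your $M+1$ guard is in fact a small improvement: the paper claims $\sum_{j=1}^n |b_j| > 0$ ``since $p(x)$ is a nonzero polynomial,'' which fails for nonzero constant polynomials, a degenerate case you (correctly) note and dispatch.
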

\begin{remark}
	Most exercises will provide $p(x)$ explicitly, for example, $p(x) = (x^2+3x)$. We prove this result in generality in Section~\ref{sec: appendix}, and demonstrate the two ways of concretely obtaining $\delta$.
\end{remark}
\begin{example}\label{eg: polynomial}
	Prove $\lim_{x \to 1} (x^2 +3x) = 4$ using the $\epsilon$-$\delta$ definition.
\end{example}
\begin{proof}[Solution to Example~\ref{eg: polynomial}]
	By Theorem~\ref{thm: novel_epsilon_delta}, we equivalently prove that $$\lim_{t \to 0} ({({(1+t)}^2 + 3(1+t))} - 4) = 0.$$
	By algebra,
	\begin{align*}
		({(1+t)}^2 + 3(1+t)-4)
		&= \cdots = t^2 + 5t.
	\end{align*}
	Hence it suffices to prove that
	$$\lim_{t \to 0} (t^2 + 5t) = 0.$$
	Fix $\eeps > 0$. We illustrate both choices of $\delta$.
	\begin{enumerate}[label=\qrom]
		\item Choose $\delta = \min\{\paren{\eeps/2}^{1/2}, {\paren{\eeps/10}}^{1/2}\}$. Then for the input error $0 < |t| < \delta$, we can bound the output error by
	\begin{align*}
	|t^2+5t|
	&\leq {|t|}^2 + 5|t|\\
	&<\delta^2 + 5\delta\\
	&\leq \frac{\eeps}{2} + \frac{\eeps}{2}
	= \eeps.
	\end{align*}
		\item Choose $\delta = \min \{1,\eeps/6\}$. Then
		\begin{align*}
		|t^2+5t|
		&< \delta^2 + 5\delta\\
		&= \delta(\delta + 5)\\
		&\leq \delta \cdot (1+5)\\
		&= 6\delta \leq 6 \cdot \frac{\eeps}{6} = \eeps.	
		\end{align*}
	\end{enumerate}
\end{proof}
\begin{theorem}\label{thm: rational_functions}
	Let $p(x), q(x)$ be nonzero polynomials. Let $c \in \RR$ such that $q(c) \neq 0$. Then
	$$\lim_{x \to c}\dsfrac{p(x)}{q(x)} = \dsfrac{p(c)}{q(c)}.$$
\end{theorem}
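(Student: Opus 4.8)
The plan is to reduce the statement, via Theorem~\ref{thm: novel_epsilon_delta}, to showing that $f(c+\cdot) - \frac{p(c)}{q(c)}$ is a fizzle function, i.e. that $\lim_{t\to 0}\bigl(\frac{p(c+t)}{q(c+t)} - \frac{p(c)}{q(c)}\bigr) = 0$. The natural strategy is to prove this in two stages: first establish that the reciprocal of a polynomial is continuous at points where the polynomial does not vanish, and then combine this with Theorem~\ref{thm: polynomial_cty} (continuity of polynomials) using a product rule for fizzle functions. Concretely, I would first observe that $\frac{p(c+t)}{q(c+t)} - \frac{p(c)}{q(c)} = \frac{p(c+t)q(c) - p(c)q(c+t)}{q(c+t)q(c)}$, so everything rests on controlling the numerator (which tends to $0$, being a polynomial in $t$ with no constant term) and bounding the denominator away from $0$.

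The key steps, in order, are as follows. First, I would appeal to Theorem~\ref{thm: polynomial_cty}: the numerator $N(t) := p(c+t)q(c) - p(c)q(c+t)$ is a polynomial in $t$ with $N(0) = 0$, so $\lim_{t\to 0} N(t) = 0$. Second, I would handle the denominator: since $q$ is a polynomial with $q(c) \neq 0$, continuity of $q$ at $c$ (again Theorem~\ref{thm: polynomial_cty}) gives a $\delta_0 > 0$ such that $|t| < \delta_0 \implies |q(c+t) - q(c)| < \tfrac{1}{2}|q(c)|$, hence $|q(c+t)| > \tfrac{1}{2}|q(c)| > 0$ on that range, and therefore $|q(c+t)q(c)| > \tfrac{1}{2}q(c)^2$. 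Third, I would combine: given $\eeps > 0$, choose $\delta$ so that for $0 < |t| < \delta$ we have both $|t| < \delta_0$ and $|N(t)| < \tfrac{1}{2}q(c)^2 \eeps$ (the latter possible by step one), whence
$$\left| \frac{p(c+t)}{q(c+t)} - \frac{p(c)}{q(c)} \right| = \frac{|N(t)|}{|q(c+t)q(c)|} < \frac{\tfrac{1}{2}q(c)^2\eeps}{\tfrac{1}{2}q(c)^2} = \eeps.$$
Finally, Theorem~\ref{thm: novel_epsilon_delta} translates this back into the claimed statement $\lim_{x\to c}\frac{p(x)}{q(x)} = \frac{p(c)}{q(c)}$.

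The main obstacle is the denominator: unlike the polynomial case, the error quotient is not itself a polynomial in $t$, so one cannot simply expand and bound term by term. The crux is the standard trick of using continuity of $q$ to trap $|q(c+t)|$ below by a positive constant on a small punctured neighbourhood of $0$; once that lower bound is in hand, the argument becomes a routine "numerator small, denominator bounded below" estimate. In the spirit of the paper, this is exactly where the fizzle-function reformulation pays off pedagogically: the student's task is cleanly split into (a) recognising $N(t)$ as a fizzle function by Theorem~\ref{thm: polynomial_cty}, and (b) the separate, conceptually distinct job of bounding the denominator away from zero — rather than a single opaque $\eeps$-$\delta$ juggling act. A remark could note that this same two-step pattern (product and reciprocal rules for fizzle functions) is precisely what Section~\ref{sec: limit_theorems} develops in general.
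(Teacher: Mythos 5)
Your proof is correct, and its overall architecture matches the paper's: reduce to a $\lim_{t\to 0}$ statement via Theorem~\ref{thm: novel_epsilon_delta}, put the difference over the common denominator $q(c+t)q(c)$, bound that denominator below by a positive constant using continuity of $q$ at $c$ (your second step is exactly the content of Lemma~\ref{thm: continuity_positivity} in the appendix), and drive the numerator to zero using Theorem~\ref{thm: polynomial_cty}. The one genuine difference is how the numerator is handled. The paper inserts $-q(c)p(c)+q(c)p(c)$, splits by the triangle inequality into a $|p(c+t)-p(c)|$ term and a $|q(c+t)-q(c)|$ term, and applies polynomial continuity to $p$ and $q$ separately; this forces a case distinction on whether $p(c)\neq 0$ (the coefficient of the second term). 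You instead observe that $N(t)=p(c+t)q(c)-p(c)q(c+t)$ is itself a polynomial in $t$ with $N(0)=0$ and apply Theorem~\ref{thm: polynomial_cty} once, at the point $0$. This is slightly cleaner: it avoids both the add-and-subtract step and the $p(c)=0$ case split, at the cost of asking the reader to accept that $t\mapsto p(c+t)q(c)-p(c)q(c+t)$ is a polynomial in $t$ (true, but worth a sentence). The only pedantic gap is that Theorem~\ref{thm: polynomial_cty} is stated for nonzero polynomials, so you should add one line covering the degenerate case where $N$ is identically zero (e.g.\ when $p$ is a scalar multiple of $q$), in which case the quotient is constant near $c$ and the conclusion is immediate. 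With that line added, the estimate $|N(t)|<\tfrac12 q(c)^2\eeps$ against $|q(c+t)q(c)|>\tfrac12 q(c)^2$ closes the argument correctly.
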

\begin{remark}
	Most exercises will provide $p(x),q(x)$ explicitly, for example, $p(x) = x+1, q(x) = (x-1)(x^2+1)$. We prove this result in generality in Section~\ref{sec: appendix}, and demonstrate the two ways of concretely how to obtain $\delta$.
\end{remark}
\begin{example}\label{eg: rational_limit}
	Prove ${\displaystyle \lim_{x \to 2} \dsfrac{x+1}{(x-1)(x^2+1)}} = \frac 35$ using the $\epsilon$-$\delta$ definition.
\end{example}
\begin{proof}[Solution to Example~\ref{eg: rational_limit}]
	By Theorem~\ref{thm: novel_epsilon_delta}, we equivalently prove that $$\lim_{t \to 0} \paren{\dsfrac{(2+t)+1}{((2+t)-1)({(2+t)}^2+1)}-\frac 25} = 0.$$
	By algebra,
	\begin{align*}
		\dsfrac{(2+t)+1}{((2+t)-1)({(2+t)}^2+1)}-\frac 35
		&= \cdots = \dsfrac{-22t-15t^2-3t^3}{5(1+t)({(2+t)}^2+1)}.
	\end{align*}
	Taking absolute signs,
	\begin{align*}
	\parenl{\dsfrac{(2+t)+1}{((2+t)-1)({(2+t)}^2+1)}-\frac 35}
	&= \frac 15 \cdot \frac 1{|1+t|} \cdot \frac{1}{{(2+t)}^2+1} \cdot (22+15|t| + 3{|t|}^2) \cdot |t|
	\end{align*}
	Fix $\eeps > 0$. Choose $\delta = \min\{1/2, \eeps/4\}$.
	Then for $0< |t| < \delta$, the reverse triangle inequality yields
	$$|1+t| \geq 1 - |t| > 1-\delta \geq 1 - \frac 12 = \frac 12 \quad \To \quad \dsfrac{1}{|1+t|} < \frac 12.$$
	For any $t \in \RR$, $${(2+t)}^2 + 1 \geq 0 + 1 = 1\quad \To \quad \dsfrac{1}{{(2+t)}^2 + 1} \leq 1.$$
	Furthermore, $$|t| < \delta \leq \frac 12 < 1 \quad \To \quad (22 + 15|t| + 3{|t|}^2) < 22 + 15 \cdot 1 + 3 \cdot 1^2 = 40.$$
	Hence,
	\begin{align*}
	\parenl{\dsfrac{(2+t)+1}{((2+t)-1)({(2+t)}^2+1)}-\frac 35}
	&= \frac 15 \cdot \frac 1{|1+t|} \cdot \frac{1}{{(2+t)}^2+1} \cdot (22+15|t| + 3{|t|}^2) \cdot |t|\\
	&< \frac 15 \cdot \frac 1{2} \cdot 1 \cdot 40 \cdot \frac{\eeps}{4} = \eeps.
	\end{align*}
\end{proof}
\section{Implementing the Limit Theorems}
\label{sec: limit_theorems}
This centre-at-zero approach can simplify many convoluted proofs, while maintaining the essence of why they work. In fact, this is arguably the last section (apart from the supplementary Section~\ref{sec: appendix}) where we employ the $\eeps$-$\delta$ definition of a limit, since the other limit theorems are essentially algebraic derivations of these results.
\begin{theorem}\label{thm: baby_example}
	$\lim_{t \to 0} t = 0$.
\end{theorem}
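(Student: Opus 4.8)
The plan is to verify directly that the identity function $f(t)=t$ satisfies the defining condition of Definition~\ref{def: zero_limit}. This is the base case of the whole theory, so there is nothing to reduce to; we simply unpack the quantifiers and exhibit a working input threshold $\delta$ in terms of the given output threshold $\eeps$.

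First I would fix an arbitrary $\eeps > 0$, as the definition demands. The two-fold goal described in Section~\ref{sec: limit_theorems} collapses here because the output error $|f(t)| = |t|$ is literally the input error $|t|$; bounding the input already bounds the output with no slack lost. So I would make the choice $\delta = \eeps$ (any $\delta \in (0,\eeps]$ works equally well, but $\delta=\eeps$ is the cleanest). Then I would take any $t$ with $0 < |t| < \delta$ and compute
$$|f(t)| = |t| < \delta = \eeps,$$
which is exactly the required conclusion $|f(t)| < \eeps$. Since $\eeps>0$ was arbitrary, this establishes $\lim_{t\to 0} t = 0$.

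I do not anticipate any real obstacle: the argument is a one-line substitution and the "reverse-engineering" of $\delta$ that students find counterintuitive is here trivial, which is precisely why it serves as the pedagogical starting point. The only thing to be careful about is presentation — stating that $\eeps$ is arbitrary, that $\delta = \eeps > 0$ so the existential is genuinely witnessed, and that the implication holds for \emph{every} such $t$ — so that the proof models the correct logical shape for the student even though the computation is immediate.
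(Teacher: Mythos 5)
Your proof is correct and is essentially identical to the paper's: both fix an arbitrary $\eeps>0$, choose $\delta=\eeps$, and observe that $0<|t|<\delta$ immediately gives $|t|<\eeps$. No further comment is needed.
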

\begin{proof}[Proof of Theorem~\ref{thm: baby_example}] Fix $\eeps > 0$. Choose $\delta = \eeps$. Then
	$$0 < |t| < \delta \quad \To \quad |t| < \delta \leq \eeps.$$
\end{proof}
Consider the limit of a sum of functions.
\begin{theorem}\label{thm: limit_of_sum}
	Let $f,g$ be real-valued functions. If $\lim_{t \to 0}f(t) = \lim_{t \to 0} g(t) = 0$, then $$\lim_{t \to 0} (f+g)(t) = 0.$$
\end{theorem}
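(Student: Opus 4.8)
The plan is to give the standard $\epsilon/2$ argument, but phrased in the centre-at-zero language that the paper has set up. Fix $\eeps > 0$. The key observation is that since $\lim_{t \to 0} f(t) = 0$ and $\lim_{t \to 0} g(t) = 0$, I may apply the definition (Definition~\ref{def: zero_limit}) to each function with the \emph{same} output threshold $\eeps/2 > 0$. This yields an input threshold $\delta_f > 0$ such that $0 < |t| < \delta_f \To |f(t)| < \eeps/2$, and likewise a $\delta_g > 0$ with $0 < |t| < \delta_g \To |g(t)| < \eeps/2$.

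Next I would choose the combined input threshold $\delta = \min\{\delta_f, \delta_g\} > 0$, so that whenever $0 < |t| < \delta$ we have simultaneously $0 < |t| < \delta_f$ and $0 < |t| < \delta_g$, hence both bounds on $|f(t)|$ and $|g(t)|$ hold at once. The computation then closes via the triangle inequality:
$$|(f+g)(t)| = |f(t) + g(t)| \leq |f(t)| + |g(t)| < \frac{\eeps}{2} + \frac{\eeps}{2} = \eeps.$$
Since $\eeps > 0$ was arbitrary, this is exactly the condition in Definition~\ref{def: zero_limit} for $\lim_{t\to 0}(f+g)(t) = 0$.

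There is no real obstacle here: the only point requiring the slightest care is the ``budget-splitting'' idea — that one is free to invoke each hypothesis with output threshold $\eeps/2$ rather than $\eeps$, since the definition quantifies over \emph{all} positive output thresholds — together with the standard trick of taking the minimum of the two resulting input thresholds so that a single $\delta$ controls both error terms. Pedagogically, I would flag that this $\eeps/2$ split and the $\min$ of the input thresholds are the two reusable moves, since the same pattern recurs when proving the limit laws for scalar multiples and, with a little more work, products.
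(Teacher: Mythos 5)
Your proof is correct and follows essentially the same route as the paper's: the $\eeps/2$ split applied to each hypothesis, the choice $\delta = \min\{\delta_f,\delta_g\}$, and the triangle inequality to close the bound. No gaps to report.
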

\begin{proof}[Proof of Theorem~\ref{thm: limit_of_sum}]
	We want to show that 
	$$\lim_{t \to 0} (f+g)(t) = 0.$$
	By the triangle inequality,
	\begin{align*}
		|(f+g)(t)| & \leq |f(t)| + |g(t)|.
	\end{align*}
	Fix $\eeps > 0$.
	Our goal is simply to upper bound both summands by $< \eeps/2$ each. For the real number $\eeps/2 > 0$, since $\lim_{t \to 0}f(t)=0$, there exists some $\delta_1 > 0$ such that
	$$0 < |t| < \delta_1 \quad \To \quad |f(t)| < \frac{\eeps}{2}.$$
	For the real number $\eeps/2 > 0$, since $\lim_{t \to 0}g(c+t)=0$, there exists some $\delta_2 > 0$ such that
	$$0 < |t| < \delta_2 \quad \To \quad |g(t)| < \frac{\eeps}{2}.$$
	Choose $\delta = \min\{\delta_1,\delta_2\}$. Then for $0<|t| <\delta$,
	\begin{align*}
		|(f+g)(t)| 
		&\leq |f(t)| + |g(t)| < \frac{\eeps}{2} + \frac{\eeps}{2} = \eeps.
	\end{align*}
\end{proof}
We can similar extend this notion to show that taking limits is linear, and even allow students to prove it as a benign exercise.
\begin{theorem}\label{thm: limit_of_scalar_mult}
	Let $k \in \RR$ and $f$ be a real-valued function. If $\lim_{t \to 0}f(t) = 0$, then $$\lim_{t \to 0} (kf)(t) = 0.$$
\end{theorem}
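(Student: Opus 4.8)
The plan is to mirror the structure of the proof of Theorem~\ref{thm: limit_of_sum}, which is the natural template here since scalar multiplication is even simpler than addition. First I would dispose of the trivial case $k = 0$ separately: then $(kf)(t) = 0$ for all $t$, so for any $\eeps > 0$ any choice of $\delta$ (say $\delta = 1$) works, since $|0| = 0 < \eeps$. This case has to be handled on its own because the main argument will want to divide by $|k|$.

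For the main case $k \neq 0$, I would fix $\eeps > 0$ and observe that the governing inequality is $|(kf)(t)| = |k|\,|f(t)|$, so it suffices to make $|f(t)| < \eeps/|k|$. Since $\eeps/|k| > 0$ and $\lim_{t \to 0} f(t) = 0$, the hypothesis applied to the positive real number $\eeps/|k|$ furnishes a $\delta > 0$ such that $0 < |t| < \delta \To |f(t)| < \eeps/|k|$. I would then take this very $\delta$ as the input threshold and verify, for $0 < |t| < \delta$, the chain
\begin{align*}
	|(kf)(t)| = |k|\,|f(t)| < |k| \cdot \frac{\eeps}{|k|} = \eeps,
\end{align*}
which closes the argument.

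There is essentially no obstacle here — the only subtlety worth flagging is the $k = 0$ split, which is easy to overlook precisely because when $k \neq 0$ one instinctively writes $\eeps/|k|$. An alternative that avoids the case split altogether is to pick $\delta$ so that $|f(t)| < \eeps/(1+|k|)$, giving $|k|\,|f(t)| \leq (1+|k|)\,|f(t)| < \eeps$ uniformly; I would mention this as the slicker route but present the $\eeps/|k|$ version first since it matches the pedagogical emphasis of the paper on transparently reverse-engineering the output bound. Combined with Theorem~\ref{thm: limit_of_sum}, this establishes that the fizzle functions form a vector space, which is the payoff I would note at the end.
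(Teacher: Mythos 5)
Your argument is correct and is essentially the same as the paper's: fix $\eeps > 0$, apply the hypothesis to the positive number $\eeps/|k|$ to obtain $\delta$, and conclude via $|(kf)(t)| = |k|\,|f(t)| < \eeps$. In fact you are slightly more complete than the paper, whose proof opens with ``Suppose $k \neq 0$'' and never returns to the trivial case $k = 0$, which you dispose of explicitly (and your $\eeps/(1+|k|)$ variant is a clean way to avoid the split entirely).
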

We can even show that $\lim_{t \to 0}(\cdot)$ is multiplicative.
\begin{theorem}\label{thm: multiplicativity_of_lim}
	Let $f,g$ be real-valued functions. If $\lim_{t \to 0}f(t) = \lim_{t \to 0} g(t) = 0$, then $$\lim_{t \to 0} (f\cdot g)(t) = 0.$$
\end{theorem}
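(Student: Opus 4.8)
The plan is to run the $\eeps$-$\delta$ argument directly on the product, exploiting the fact that $|(f\cdot g)(t)| = |f(t)|\cdot|g(t)|$ factors cleanly precisely because both limits are zero. This is exactly the situation the centre-at-zero reformulation is built for: there are no cross terms of the form $f(t)M$ or $Lg(t)$ to add and subtract, as there would be in the general product rule $\lim(fg) = (\lim f)(\lim g)$. So the whole proof reduces to: a product of two quantities that can each be made small is itself small.

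First I would fix $\eeps > 0$ and set the two-fold goal from Section~\ref{sec: limit_theorems}: upper-bound the input error $|t|$ by a judiciously chosen $\delta$ so as to upper-bound the output error $|f(t)|\cdot|g(t)|$ by $\eeps$. The natural split is to demand $|f(t)| < \sqrt{\eeps}$ and $|g(t)| < \sqrt{\eeps}$, so that their product is $< \eeps$. Since $\sqrt{\eeps} > 0$, the hypothesis $\lim_{t \to 0} f(t) = 0$ supplies a $\delta_1 > 0$ with $0 < |t| < \delta_1 \To |f(t)| < \sqrt{\eeps}$, and likewise $\lim_{t \to 0} g(t) = 0$ supplies a $\delta_2 > 0$ with $0 < |t| < \delta_2 \To |g(t)| < \sqrt{\eeps}$. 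Choosing $\delta = \min\{\delta_1,\delta_2\}$, exactly as in the proof of Theorem~\ref{thm: limit_of_sum}, forces both bounds to hold simultaneously on $0 < |t| < \delta$, whence $|(f\cdot g)(t)| = |f(t)|\cdot|g(t)| < \sqrt{\eeps}\cdot\sqrt{\eeps} = \eeps$.

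I expect no real obstacle here; the only design choice is the symmetric split $\eeps = \sqrt{\eeps}\cdot\sqrt{\eeps}$, which could be swapped for the asymmetric device of bounding $|g(t)| < 1$ on a first neighbourhood and $|f(t)| < \eeps$ on a second — a variant worth displaying since it mirrors the $\min\{1,\cdots\}$ trick used in Examples~\ref{eg: polynomial} and \ref{eg: rational_limit}. The takeaway is pedagogical rather than technical: the reformulation collapses the customary bookkeeping of the product rule into one line of inequalities, which is the recurring theme of this section.
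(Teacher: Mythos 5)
Your proposal is correct and follows essentially the same route as the paper's proof: fix $\eeps>0$, obtain $\delta_1,\delta_2$ so that $|f(t)|<\sqrt{\eeps}$ and $|g(t)|<\sqrt{\eeps}$, take $\delta=\min\{\delta_1,\delta_2\}$, and multiply. Your version is in fact slightly tidier, since you explicitly fix $\eeps$ before invoking the hypotheses and keep the final inequality strict.
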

\begin{proof}[Proof of Theorem~\ref{thm: multiplicativity_of_lim}]
	We want to show that 
	$$\lim_{t \to 0} (f\cdot g)(t) = 0.$$
	By algebra,
	\begin{align*}
		|(f\cdot g)(t)|
		&= |f(t) \cdot g(t)| = |f(t)| \cdot |g(t)|.
	\end{align*}
	Since
	$$\lim_{t \to 0} f(t) = 0, \quad \lim_{t \to 0} g(t) = 0,$$ there exist $\delta_1,\delta_2>0$ such that
	\begin{align*}
		0 < |t| < \delta_1 \quad &\To\quad |f(t)| < \sqrt{\eeps},\\
		0 < |t| < \delta_2 \quad &\To\quad |g(t)| < \sqrt{\eeps}.
	\end{align*}
	Choose $\delta = \min\{\delta_1,\delta_2\}$. Then
	\begin{align*}
		|(f\cdot g)(t)|
		&=  |f(t)| |g(t)| \leq \sqrt{\eeps}\cdot \sqrt{\eeps} = \eeps.
	\end{align*}
\end{proof}
By Theorem~\ref{thm: novel_epsilon_delta}, we can rewrite these theorems to centre around the point $c \in \RR$ rather than just $0$.
In particular, the proof for the continuity of $\sin(\cdot)$ at $0$ becomes strikingly obvious.
\begin{theorem}\label{thm: cty_of_sin_at_zero}
	$\lim_{t \to 0} \sin(t) = 0$.
\end{theorem}
\begin{proof}[Proof of Theorem~\ref{thm: cty_of_sin_at_zero}]
	By a geometric argument, for $0 < |t| < \frac 12 \pi$,
	$$|{\sin(t)}| < |t|.$$
	Fix $\eeps > 0$.
	Choose $\delta = \min\{\frac 12 \pi, \eeps\}$. Then for $0 < |t| < \eeps$,
	$$|{\sin(t)}| < \eeps.$$
\end{proof}
For limits of quotients, we need to be careful, since we cannot allow for zero-denominator errors. Thus, we will first consider the denominator having limit $1$, before generalising using our previous rules.
\begin{theorem}\label{thm: limit_of_quotient}
	Let $f$ be a real-valued function. Suppose $\lim_{t \to 0}f(t) = 1$. Then $\lim_{t \to 0} \dsfrac{1}{f(t)} = 1$.
\end{theorem}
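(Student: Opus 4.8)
The plan is to reduce the claim to the zero-limit form of Definition~\ref{def: zero_limit}: since $\lim_{t\to 0}f(t) = 1$ is equivalent to $\lim_{t\to 0}(f(t)-1) = 0$ (Theorem~\ref{thm: novel_epsilon_delta} with $c=0$), and what we want is $\lim_{t\to 0}(\tfrac{1}{f(t)}-1) = 0$, I would start from the algebraic identity
$$\parenl{\dsfrac{1}{f(t)}-1} = \dsfrac{\parenl{1-f(t)}}{\parenl{f(t)}}.$$
The numerator $\parenl{1-f(t)}$ is exactly the quantity the hypothesis controls; the only obstruction is that the denominator $\parenl{f(t)}$ could itself be close to $0$, so the first genuine step must be to bound $\parenl{f(t)}$ away from zero on some punctured neighbourhood of $0$.

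First I would invoke the hypothesis $\lim_{t\to 0}f(t)=1$ at the fixed output threshold $\eeps_0 = 1/2$: there is some $\delta_0 > 0$ with $0 < |t| < \delta_0 \To \parenl{f(t)-1} < 1/2$. By the reverse triangle inequality this gives $\parenl{f(t)} > 1/2$, hence $\dsfrac{1}{\parenl{f(t)}} < 2$, whenever $0 < |t| < \delta_0$. This is the key move: it turns the quotient estimate into a product estimate on a region where $f$ is guaranteed nonzero, and it is the step that has no analogue in the proofs for sums (Theorem~\ref{thm: limit_of_sum}) or products (Theorem~\ref{thm: multiplicativity_of_lim}).

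Then I would fix an arbitrary $\eeps > 0$ and apply the hypothesis a second time, now at the output threshold $\eeps/2 > 0$, to obtain $\delta_1 > 0$ with $0 < |t| < \delta_1 \To \parenl{f(t)-1} < \eeps/2$. Setting $\delta = \min\{\delta_0,\delta_1\}$ and combining the two bounds, for $0 < |t| < \delta$ we would get
$$\parenl{\dsfrac{1}{f(t)}-1} = \dsfrac{\parenl{1-f(t)}}{\parenl{f(t)}} < 2\cdot\dsfrac{\eeps}{2} = \eeps,$$
which is precisely $\lim_{t\to 0}(\tfrac{1}{f(t)}-1) = 0$, i.e.\ $\lim_{t\to 0}\tfrac{1}{f(t)} = 1$.

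The hard part is not the closing $\eeps$-$\delta$ bookkeeping, which is the familiar ``bound the input error, then bound the output error'' routine; it is recognising that one must spend the hypothesis once up front, at a concrete threshold such as $1/2$, purely to keep the denominator away from $0$, before the target expression $\tfrac{1}{f(t)}-1$ is even safely defined. Any fixed value in $(0,1)$ works in place of $1/2$; the choice $1/2$ merely makes the constant $\dsfrac{1}{\parenl{f(t)}} < 2$ clean.
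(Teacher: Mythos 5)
Your proposal is correct and follows essentially the same route as the paper's proof: the same algebraic identity $\left|\tfrac{1}{f(t)}-1\right| = |f(t)-1|/|f(t)|$, a first application of the hypothesis at the fixed threshold $1/2$ to bound the denominator away from zero, and a second application at a scaled threshold combined via $\delta=\min\{\delta_1,\delta_2\}$. In fact your constants are the correct ones ($|f(t)|>1/2$ gives $1/|f(t)|<2$, paired with $\eeps/2$), whereas the paper's displayed conclusion $\tfrac{1}{f(t)}<\tfrac 12$ is a slip --- it should read $<2$, after which its companion choice of $2\eeps$ would have to become $\eeps/2$ exactly as in your version.
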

\begin{proof}[Proof of Theorem~\ref{thm: limit_of_quotient}]
	By algebra,
	$$\parenl{\frac{1}{f(t)} - 1} = \frac{|f(t)-1|}{|f(t)|}.$$
	Since $\lim_{t \to 0} (f(t) - 1) = 0$, for the real number $1/2 > 0$, there exists $\delta_1 > 0$ such that for $0 < |t| < \delta_1$,
	\begin{align*}
	|f(t)-1|<1/2\quad &\To \quad -1/2 < f(t) - 1 < 1/2\\
	\quad &\To \quad f(t) > 1-1/2 = 1/2\\
	&\To \quad \dsfrac{1}{|f(t)|} = \frac{1}{f(t)} < \frac 12.	
	\end{align*}
	Fix $\eeps > 0$. Since $\lim_{t \to 0} (f(t) - 1) = 0$, for the real number $2\eeps > 0$, there exists $\delta_2 > 0$ such that for $0 < |t| < \delta_1$,
	$$|f(t) - 1| < 2\eeps.$$
	Choose $\delta = \min\{\delta_1,\delta_2\}$. Then
	\begin{align*}
	\parenl{\frac{1}{f(t)} - 1} 
	&= \frac{1}{|f(t)|}\cdot |f(t)-1| < \frac 12 \cdot 2\eeps = \eeps.
	\end{align*}
\end{proof}
The squeeze theorems also receives a more elegant proof.
\begin{theorem}\label{thm: squeeze_theorem}
	Let $f,g,h$ be real-valued functions. Suppose $\lim_{t \to 0} f(t) = \lim_{t \to 0} h(t) = 0$ and $f(t) \leq g(t) \leq h(t)$ for $t$ near $0$. Then $$\lim_{t \to 0} g(t) = 0.$$
\end{theorem}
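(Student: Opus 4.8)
The plan is to unwind Definition~\ref{def: zero_limit} directly, in the same bookkeeping style as the proof of Theorem~\ref{thm: limit_of_sum}, and to exploit the elementary observation that the bound $|f(t)| < \eeps$ is literally the double inequality $-\eeps < f(t) < \eeps$. The sandwich $f(t) \le g(t) \le h(t)$ then traps $g(t)$ between the lower edge of $f$'s $\eeps$-band and the upper edge of $h$'s $\eeps$-band, which is exactly what we need. Note that unlike the classical squeeze proof, no subtraction of limits or triangle inequality is required, because the common limit value is already $0$.

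Concretely, first I would fix $\eeps > 0$. Since $\lim_{t \to 0} f(t) = 0$, there is $\delta_1 > 0$ with $0 < |t| < \delta_1 \To |f(t)| < \eeps$, and in particular $-\eeps < f(t)$. Since $\lim_{t \to 0} h(t) = 0$, there is $\delta_2 > 0$ with $0 < |t| < \delta_2 \To |h(t)| < \eeps$, and in particular $h(t) < \eeps$. The hypothesis ``$f(t) \le g(t) \le h(t)$ for $t$ near $0$'' is itself an $\exists$-statement: it unpacks to some $\delta_3 > 0$ such that $0 < |t| < \delta_3 \To f(t) \le g(t) \le h(t)$. I would then choose $\delta = \min\{\delta_1,\delta_2,\delta_3\}$, so that for $0 < |t| < \delta$ all three conclusions hold simultaneously and can be chained:
$$-\eeps < f(t) \le g(t) \le h(t) < \eeps,$$
whence $|g(t)| < \eeps$, which is precisely the required implication.

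There is no real obstacle here; the proof is a few lines. The one point a student is likely to miss — and the only thing worth flagging in the writeup — is that the qualifier ``for $t$ near $0$'' must be read as a genuine $\exists\,\delta_3$ clause and folded into the final minimum alongside $\delta_1$ and $\delta_2$; forgetting it leaves a gap exactly where the hypothesis is used. Everything else is the same ``pick $\delta$ to make all the bands line up'' move already rehearsed in Theorems~\ref{thm: limit_of_sum} and \ref{thm: multiplicativity_of_lim}.
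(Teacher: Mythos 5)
Your proof is correct and follows essentially the same route as the paper's: extract $\delta_1,\delta_2,\delta_3$ from the two limit hypotheses and the ``near $0$'' clause, take the minimum, and chain $-\eeps < f(t) \leq g(t) \leq h(t) < \eeps$. Your version is in fact slightly cleaner, since you use $\eeps$ directly where the paper needlessly passes through $\eeps/2$.
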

\begin{proof}[Proof of Theorem~\ref{thm: squeeze_theorem}]
	Since $f(t) \leq g(t) \leq h(t)$ for $t$ near $0$, there exists $\delta_1 > 0$ such that
	$$0 < |t| < \delta_1 \quad \To \quad f(t) \leq g(t) \leq h(t).$$
	Fix $\eeps > 0$. We want to find $\delta > 0$ such that
	$$0 < |t| <\delta \quad \To \quad |g(x)| < \eeps \quad \Iff \quad -\eeps < g(x) < \eeps.$$
	Since $\lim_{t \to 0} f(t) = L$, for the real number $\eeps > 0$, there exists $\delta_2 > 0$ such that
	$$0 < |t| < \delta_1 \quad \To \quad |f(t)| < \eeps/2 \quad \To \quad f(t) > -\eeps/2.$$
	Since $\lim_{t \to 0} f(t) = L$, for the real number $\eeps > 0$, there exists $\delta_3 > 0$ such that
	$$0 < |t| < \delta_2 \quad \To \quad |h(t)| < \eeps/2 \quad \To \quad h(t) < \eeps/2.$$
	Choose $\delta = \min\{\delta_1,\delta_2, \delta_3\}$. Then
	\begin{align*}
	0 < |t| < \delta \quad 
	&\quad \To  \quad -\eeps < f(t) \leq g(t) \leq h(t) < \eeps\\
	&\quad \To \quad -\eeps < g(t) < \eeps\\
	&\quad \To \quad |g(t)| < \eeps,	
	\end{align*}
	as required.
\end{proof}
\section{Generalising the Limit Theorems}
\label{sec: gen_limit_theorems}
We can now generalise these results for nonzero limits. In fact, since the $\epsilon$-$\delta$ heavy lifting has been done in Section~\ref{sec: limit_theorems}, by assuming the results therein, we reduce their proofs to relatively benign high-school computational exercises.
\begin{corollary}\label{cor: baby_example}
	For any $c \in \RR$, $\lim_{x \to c} x = c$.
\end{corollary}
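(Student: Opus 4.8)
The plan is to reduce Corollary~\ref{cor: baby_example} directly to Theorem~\ref{thm: baby_example} via the reformulation in Theorem~\ref{thm: novel_epsilon_delta}. By Theorem~\ref{thm: novel_epsilon_delta} applied with $f(x) = x$, $L = c$, the statement $\lim_{x \to c} x = c$ is equivalent to
$$\lim_{t \to 0} \big((c+t) - c\big) = 0.$$

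The next step is a one-line algebraic simplification: $(c+t) - c = t$, so the required limit is precisely $\lim_{t \to 0} t = 0$, which is exactly Theorem~\ref{thm: baby_example}. Hence the corollary follows immediately. This is the entire argument — the centre-at-zero reformulation collapses the ``general point'' case to the already-established baby case with no $\epsilon$-$\delta$ bookkeeping whatsoever.

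There is essentially no obstacle here; the only thing to be careful about is making sure the substitution in Theorem~\ref{thm: novel_epsilon_delta} is invoked with the correct instantiation ($f$ the identity, $L = c$) and that the cancellation $(c+t)-c = t$ is stated explicitly so the reader sees the reduction. If one wanted to be pedantic one could note that the domain $A = \RR$ so $f(c+t)$ is defined for all $t$, but this is not needed for the argument. I would present it in two sentences: cite Theorem~\ref{thm: novel_epsilon_delta} to rewrite the goal, then cite Theorem~\ref{thm: baby_example} after cancelling.
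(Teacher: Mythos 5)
Your proposal is correct and follows exactly the same route as the paper: invoke Theorem~\ref{thm: novel_epsilon_delta} with $f$ the identity and $L = c$, cancel $(c+t)-c = t$, and cite Theorem~\ref{thm: baby_example}. Nothing is missing.
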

\begin{proof}
	By Theorem~\ref{thm: novel_epsilon_delta}, we equivalently want to show that
	$$\lim_{t \to 0} ((c+t)-c) = 0 \quad \Iff \quad \lim_{t \to 0} t = 0,$$
	which holds by Theorem~\ref{thm: baby_example}.
\end{proof}
We can furthermore generalise the limit theorems for nonzero limit points $c$.
\begin{corollary}\label{cor: general_limit_point}
	Let $f,g, h$ be real valued functions, $c \in \RR$, and $k \in \RR$. Suppose $$\lim_{x \to c} f(x) = \lim_{x \to c}g(x) = 0 \quad \mbox{and} \quad \lim_{x \to c}h(x) = 1.$$ Then
	\begin{align*}
		\lim_{x \to c} (f+ g)(x) &= 0,\\
		\lim_{x \to c} (kf)(x) &= 0,\\
		\lim_{x \to c} (f\cdot  g)(x) &= 0,\\
		\lim_{x \to c} \dsfrac{1}{h(x)} = 1.
	\end{align*}
	Furthermore, let $p$ be a real-valued function such that $f(x) \leq p(x) \leq g(x)$ for $x$ near $c$. Then
	$$\lim_{x \to c} p(x) = 0.$$
\end{corollary}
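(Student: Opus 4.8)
The plan is to reduce every assertion in the corollary to its zero-centred counterpart from Section~\ref{sec: limit_theorems} via Theorem~\ref{thm: novel_epsilon_delta}, so that no fresh $\eeps$-$\delta$ argument is required. First I would record the translations of the hypotheses: by Theorem~\ref{thm: novel_epsilon_delta}, $\lim_{x \to c} f(x) = 0$ is exactly the statement $\lim_{t \to 0} f(c+t) = 0$, and likewise $\lim_{t \to 0} g(c+t) = 0$, while $\lim_{x \to c} h(x) = 1$ is exactly $\lim_{t \to 0}(h(c+t) - 1) = 0$. It is convenient to name the shifted functions $\tilde f(t) = f(c+t)$, $\tilde g(t) = g(c+t)$, $\tilde h(t) = h(c+t)$, $\tilde p(t) = p(c+t)$; the key observation is that the shift $x = c+t$ commutes with all the pointwise operations involved, so that $(f+g)(c+t) = \tilde f(t) + \tilde g(t)$, $(kf)(c+t) = k\,\tilde f(t)$, $(f\cdot g)(c+t) = \tilde f(t)\,\tilde g(t)$, and $(1/h)(c+t) = 1/\tilde h(t)$.

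Each line of the corollary then follows mechanically. For the sum, apply Theorem~\ref{thm: limit_of_sum} to $\tilde f, \tilde g$ to obtain $\lim_{t \to 0}(\tilde f + \tilde g)(t) = 0$, that is $\lim_{t \to 0}(f+g)(c+t) = 0$, and translate back via Theorem~\ref{thm: novel_epsilon_delta} to $\lim_{x \to c}(f+g)(x) = 0$. The scalar-multiple line is identical using Theorem~\ref{thm: limit_of_scalar_mult}, and the product line uses Theorem~\ref{thm: multiplicativity_of_lim}. For the reciprocal, note that $\lim_{t \to 0}\tilde h(t) = 1$ by the first paragraph, so Theorem~\ref{thm: limit_of_quotient} gives $\lim_{t \to 0} 1/\tilde h(t) = 1$, i.e. $\lim_{t \to 0} (1/h)(c+t) = 1$, and Theorem~\ref{thm: novel_epsilon_delta} converts this to $\lim_{x \to c} 1/h(x) = 1$. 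For the squeeze statement, observe that $f(x) \leq p(x) \leq g(x)$ for $x$ near $c$ is, under the substitution $x = c+t$, the same as $\tilde f(t) \leq \tilde p(t) \leq \tilde g(t)$ for $t$ near $0$; since $\lim_{t \to 0}\tilde f(t) = \lim_{t \to 0}\tilde g(t) = 0$, Theorem~\ref{thm: squeeze_theorem} yields $\lim_{t \to 0}\tilde p(t) = 0$, hence $\lim_{x \to c}p(x) = 0$.

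I expect no genuine obstacle here: the proof is pure bookkeeping, which is precisely the point the paper is making about the reformulation. The only two places that deserve a sentence of care are (i) checking that the affine shift $x = c+t$ really does commute with addition, scalar multiplication, multiplication, and the reciprocal, so that expressions such as ``$1/h$ evaluated at $c+t$'' are literally $1/\tilde h(t)$; and (ii) the translation of ``for $x$ near $c$'' into ``for $t$ near $0$'', which is immediate because $x = c+t$ maps a punctured neighbourhood of $c$ bijectively onto a punctured neighbourhood of $0$. Everything else is a direct citation of the correspondingly numbered theorem from Section~\ref{sec: limit_theorems} sandwiched between two applications of Theorem~\ref{thm: novel_epsilon_delta}.
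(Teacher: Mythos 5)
Your proposal is correct and follows essentially the same route as the paper's own proof: translate the hypotheses to their zero-centred forms via Theorem~\ref{thm: novel_epsilon_delta}, invoke Theorems~\ref{thm: limit_of_sum}, \ref{thm: limit_of_scalar_mult}, \ref{thm: multiplicativity_of_lim}, \ref{thm: limit_of_quotient}, and \ref{thm: squeeze_theorem} for the shifted functions, and translate back. Your explicit remarks that the shift $x = c+t$ commutes with the pointwise operations and carries ``near $c$'' to ``near $0$'' are exactly the (tacit) justifications underlying the paper's one-line chain of equalities.
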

\begin{proof}[Proof of Corollary~\ref{cor: general_limit_point}]
	By Theorem~\ref{thm: novel_epsilon_delta}, the hypotheses are equivalent to
	$$\lim_{t \to 0} f(c+t) = \lim_{t \to 0} g(c+t) = 0, \quad \lim_{t \to 0} h(c+t) = 1.$$
	Therefore,
	\begin{align*}
		\lim_{x \to c} (f+ g)(x)
		&= \lim_{t \to 0} (f+ g)(c+t) = 0,\\
		\lim_{x \to c} (kf)(x) &= \lim_{t \to 0} (kf)(c+t) = 0,\\
		\lim_{x \to c} (f\cdot  g)(x) &= \lim_{t \to 0} (f\cdot  g)(c+t) = 0,\\
		\lim_{x \to c} \dsfrac{1}{h(x)} &= \lim_{t \to 0} \dsfrac{1}{h(c+t)} = 1.
	\end{align*}
	For the last result (i.e. the analog of Theorem~\ref{thm: squeeze_theorem}), $x = c+t$ being near $c$ is equivalent to $t$ being near $0$, and $f(x) \leq p(x) \leq g(x)$ is translated to
	$$f(c+t) \leq p(c+t) \leq g(c+t).$$
	By Theorem~\ref{thm: squeeze_theorem}, $\lim_{x \to c} p(x) = \lim_{t \to 0} p(c+t) = 0$.
\end{proof}
\begin{theorem}\label{thm: generalise}
	Let $f,g$ be real-valued functions, $c \in \RR$, and $k \in \RR$. Suppose $\lim_{x \to c} f(x) = L$ and $\lim_{x \to c} g(x) = M$. Then
	\begin{align*}
		\lim_{x \to c} (f+ g)(x) &= L+M,\\
		\lim_{x \to c} (kf)(x) &= kL,\\
		\lim_{x \to c} (f- g)(x) &= L-M,\\
		\lim_{x \to c} (f\cdot  g)(x) &= LM.
	\end{align*}
	Furthermore, let $p$ be a real-valued function such that $f(x) \leq p(x) \leq g(x)$ for $x$ near $c$. Suppose $M = L$. Then
	$$\lim_{x \to c} p(x) = L.$$
\end{theorem}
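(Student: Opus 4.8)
The plan is to reduce every clause to the zero-limit (``fizzle'') case already handled in Corollary~\ref{cor: general_limit_point}, using the translation principle of Theorem~\ref{thm: novel_epsilon_delta}. First I would record an elementary \emph{shift lemma}: for a real-valued function $h$ and a constant $A \in \RR$, one has $\lim_{x \to c} h(x) = 0$ if and only if $\lim_{x \to c}(A + h(x)) = A$. This is immediate from Theorem~\ref{thm: novel_epsilon_delta} applied to the right-hand function with limit value $A$, since $(A + h(c+t)) - A = h(c+t)$, so the two statements unwind to the same condition $\lim_{t \to 0} h(c+t) = 0$. Applying the shift lemma to $h = f - L$ shows that the hypothesis $\lim_{x \to c} f(x) = L$ is equivalent to $\lim_{x \to c}(f(x) - L) = 0$; write $\phi := f - L$ and $\psi := g - M$, so that $\lim_{x \to c}\phi(x) = \lim_{x \to c}\psi(x) = 0$, i.e. both are zero-limit functions of the kind Corollary~\ref{cor: general_limit_point} understands.

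Next I would substitute $f = L + \phi$ and $g = M + \psi$ and expand each combination, isolating a constant plus an ``error'' built from $\phi, \psi$. Concretely, $f + g = (L+M) + (\phi + \psi)$, $kf = kL + (k\phi)$, $f - g = (L-M) + (\phi + (-1)\psi)$, and $f \cdot g = LM + (L\psi + M\phi + \phi\psi)$. In each case the parenthesised remainder is assembled from $\phi$ and $\psi$ purely by the sum, scalar-multiple, and product operations covered by Corollary~\ref{cor: general_limit_point}, hence has limit $0$ at $c$; the shift lemma then delivers the stated values $L+M$, $kL$, $L-M$, $LM$ respectively. The only clause needing a little extra care is the product: its remainder $L\psi + M\phi + \phi\psi$ has three terms, so I would apply the sum clause of Corollary~\ref{cor: general_limit_point} twice, using that $L\psi$ and $M\phi$ are scalar multiples of zero-limit functions and $\phi\psi$ is a product of two zero-limit functions.

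For the squeeze statement, assume $M = L$ and $f(x) \leq p(x) \leq g(x)$ for $x$ near $c$. Subtracting the constant $L$ preserves these inequalities, so $\phi(x) \leq p(x) - L \leq \psi(x)$ for $x$ near $c$, where I used $\psi = g - M = g - L$. Since $\lim_{x \to c}\phi(x) = \lim_{x \to c}\psi(x) = 0$, the squeeze clause of Corollary~\ref{cor: general_limit_point} applied to $p - L$ gives $\lim_{x \to c}(p(x) - L) = 0$, and one final use of the shift lemma yields $\lim_{x \to c} p(x) = L$.

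I do not expect a genuine obstacle: the $\epsilon$-$\delta$ content has been front-loaded into Section~\ref{sec: limit_theorems} and repackaged at the point $c$ in Corollary~\ref{cor: general_limit_point}, so what remains is bookkeeping. The two spots that merit attention are (a) stating and justifying the shift lemma — which is really nothing more than Theorem~\ref{thm: novel_epsilon_delta} read in the case where the ``error'' differs from the target by an additive constant — and (b) the product case, where the naïve expansion produces three error terms, so one must iterate the two-function sum rule rather than appeal to a finite-sum rule that has not been proved.
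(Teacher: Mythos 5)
Your proposal is correct and follows essentially the same route as the paper: reduce each clause to a zero-limit statement about $f-L$ and $g-M$, invoke Corollary~\ref{cor: general_limit_point}, and shift back by the constant (your explicit ``shift lemma'' is exactly the equivalence the paper uses implicitly via Theorem~\ref{thm: novel_epsilon_delta}). The only cosmetic difference is in the product clause, where you decompose $fg - LM = L\psi + M\phi + \phi\psi$ and stay at the zero-limit level throughout, while the paper expands $(f-L)(g-M)$ and re-adds $Mf + Lg - LM$ using the already-proved sum and scalar rules; the two are the same algebraic identity read in opposite directions.
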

\begin{proof}
	By Definition~\ref{def: vanilla_epsilon_delta}, we are equivalently given that
	$$\lim_{x \to c} (f(x)-L) = \lim_{x \to c} (g(x)-M) = 0.$$
	By Corollary~\ref{cor: general_limit_point},
	$$\lim_{x \to c} ((f+g)(x)-(L+M)) = \lim_{x \to c} ((f(x)-L) + (g(x)-M)) = 0,$$
	which is equivalent to $$\lim_{x \to c} (f+ g)(x) = L+M.$$
	By Corollary~\ref{cor: general_limit_point},
	$$\lim_{x \to c} ((kf)(x) - kL) = \lim_{x \to c} (k(f(x)-L)) = 0,$$
	which is equivalent to
	$$\lim_{x \to c} (kf)(x) = kL.$$
	Writing $f-g = f+(-1)g$,
	\begin{align*}
		\lim_{x \to c} (f- g)(x)
		&= \lim_{x \to c} (f+(-1)g)(x)\\
		&= \lim_{x \to c} f(x) + \lim_{x \to c} (-1) g(x)\\
		&= \lim_{x \to c} f(x) + (-1) \lim_{x \to c} g(x)\\
		&= \lim_{x \to c} f(x) - \lim_{x \to c} g(x) = L-M.
	\end{align*}
	By Corollary~\ref{cor: general_limit_point},
	$$0 = \lim_{x \to c} ((f(x) - L)(g(x) - M)) = \lim_{x \to c} (f(x)g(x) - M f(x) - L g(x) + LM).$$
	By the linearity of $\lim_{x \to c}(\cdot)$,
	\begin{align*}
		\lim_{x \to c} (f(x)g(x))
		&= \lim_{x \to c} ((f(x)g(x) - M f(x) - L g(x) + LM) + (Mf(x) + Lg(x) - LM))\\
		&= \lim_{x \to c} (f(x)g(x) - M f(x) - L g(x) + LM) + M \cdot \lim_{x \to c} f(x) + L \cdot \lim_{x \to c} g(x) - LM\\
		&= 0 + ML + LM -LM = ML.
	\end{align*}
	Finally, by Corollary~\ref{cor: general_limit_point}, $f(x) - L \leq p(x) - L \leq g(x) - L$ for $x$ near $c$ allows for
	$$\lim_{x \to c} (p(x) - L) = 0 \quad \Iff \quad \lim_{x \to c}p(x) = L.$$
\end{proof}
\begin{remark}
	The conventional $\eeps$-$\delta$ proofs require the obscure step of creating variables and having to reverse-engineer the choice of $\delta$. While this has value in and of itself, it may be an unnecessarily complicated and arguably extraneous barrier-to-entry to clearly communicating the $\eeps$-$\delta$ approach to proving limit statements. In this formulation, the mental gymnastics are essentially ``covered'' or ``hidden'' by the limit theorems of $\lim_{t \to 0}(\cdot)$ and do not appear as prominently in the proofs.
\end{remark}
\begin{corollary}
	By Theorems~\ref{thm: baby_example},~\ref{thm: limit_of_sum}, and~\ref{thm: multiplicativity_of_lim}, for any polynomial $p(x) = \sum_{k=0}^n a_k x^k$, $\lim_{x \to c} p(x) = p(c)$.
\end{corollary}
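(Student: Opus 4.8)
The plan is to invoke the reformulation (Theorem~\ref{thm: novel_epsilon_delta}) to turn the claim into a statement about a function that tends to $0$ at $0$, and then build that statement up from monomials using the algebraic limit laws already established. Concretely, by Theorem~\ref{thm: novel_epsilon_delta} it suffices to prove that
$$\lim_{t \to 0} \paren{p(c+t) - p(c)} = 0,$$
i.e. that $t \mapsto p(c+t) - p(c)$ is a fizzle function. First I would expand $p(c+t) = \sum_{k=0}^n a_k (c+t)^k$ by the binomial theorem and collect powers of $t$, obtaining a polynomial $q(t) = \sum_{j=0}^n b_j t^j$ in the variable $t$, whose coefficients $b_j$ depend only on $c$ and the $a_k$. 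Evaluating at $t = 0$ gives $q(0) = p(c) - p(c) = 0$, so the constant term vanishes and $q(t) = \sum_{j=1}^n b_j t^j$. Thus the task reduces to showing $\lim_{t \to 0} q(t) = 0$ for a polynomial $q$ with no constant term.

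Next I would show $\lim_{t \to 0} t^j = 0$ for every integer $j \geq 1$ by induction on $j$: the base case $j = 1$ is exactly Theorem~\ref{thm: baby_example}, and the inductive step writes $t^{j+1} = t^j \cdot t$ and applies Theorem~\ref{thm: multiplicativity_of_lim} to the two fizzle functions $t \mapsto t^j$ and $t \mapsto t$. Scaling each monomial by its coefficient via Theorem~\ref{thm: limit_of_scalar_mult} gives $\lim_{t \to 0} b_j t^j = 0$, and then a finite induction using Theorem~\ref{thm: limit_of_sum} over $j = 1, \dots, n$ yields $\lim_{t \to 0} \sum_{j=1}^n b_j t^j = 0$, i.e. $\lim_{t \to 0} q(t) = 0$. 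Unwinding the reduction, $\lim_{x \to c} p(x) = p(c)$.

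The only real obstacle is the bookkeeping in the binomial expansion, and even that is mild: we never need the explicit values of the $b_j$, only that $q$ is a genuine polynomial in $t$ with $q(0) = 0$, the latter being immediate. Everything else is a routine finite induction chaining the sum, scalar, and product rules, exactly as the statement advertises. If one prefers to avoid the expansion altogether, an alternative is to prove $\lim_{x \to c} x^k = c^k$ directly by induction on $k$ using the generalised product rule of Theorem~\ref{thm: generalise}, then scale and sum; but the centre-at-zero route keeps all the $\epsilon$-$\delta$ content sealed inside the already-proven lemmas, which is the spirit of the paper.
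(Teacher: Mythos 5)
Your proof is correct and is essentially the derivation the corollary intends: reduce via Theorem~\ref{thm: novel_epsilon_delta} to a polynomial in $t$ with vanishing constant term, then chain Theorems~\ref{thm: baby_example}, \ref{thm: multiplicativity_of_lim}, \ref{thm: limit_of_scalar_mult} and \ref{thm: limit_of_sum} by finite induction --- the paper leaves these details unstated (its appendix proves the same fact, Theorem~\ref{thm: polynomial_cty}, by a separate direct $\eeps$-$\delta$ argument with an explicit $\delta$). One detail you get right that the statement glosses over: Theorem~\ref{thm: limit_of_scalar_mult} is genuinely needed to absorb the coefficients $b_j$, since multiplicativity only applies when both factors tend to $0$, so the corollary's citation list should really include it.
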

Furthermore, the proof for the continuity of $\sin(\cdot)$ reduces to merely a matter of trigonometric manipulation.
\begin{corollary}\label{cor: cty_of_sin}
	For $c \in \RR$, $\lim_{x \to c} \sin(x) = \sin(c)$.
\end{corollary}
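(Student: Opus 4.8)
The plan is to reduce everything to Theorem~\ref{thm: novel_epsilon_delta} and then let the limit theorems of Section~\ref{sec: limit_theorems} do the work. By that theorem, $\lim_{x \to c}\sin(x) = \sin(c)$ is equivalent to
$$\lim_{t \to 0}\paren{\sin(c+t) - \sin(c)} = 0,$$
that is, to the assertion that $\sin(c+\cdot) - \sin(c)$ is a fizzle function. So the whole corollary follows once I establish this single $t \to 0$ statement.

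First I would expand using the angle-addition formula $\sin(c+t) = \sin(c)\cos(t) + \cos(c)\sin(t)$, which rearranges to
$$\sin(c+t) - \sin(c) = \sin(c)\paren{\cos(t) - 1} + \cos(c)\sin(t).$$
This presents the target as a sum of two scalar multiples (by the constants $\sin(c)$ and $\cos(c)$) of functions of $t$. By Theorem~\ref{thm: limit_of_scalar_mult} and Theorem~\ref{thm: limit_of_sum}, it therefore suffices to prove the two atomic facts $\lim_{t \to 0}\sin(t) = 0$ and $\lim_{t \to 0}\paren{\cos(t) - 1} = 0$. The first is exactly Theorem~\ref{thm: cty_of_sin_at_zero} and needs nothing more.

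The continuity of cosine at $0$ is the only genuinely new ingredient, and I expect it to be the main obstacle. I would derive it from the identity $1 - \cos(t) = \frac{\sin^2(t)}{1 + \cos(t)}$, valid wherever $\cos(t) \neq -1$ and in particular for $t$ near $0$: since $1 + \cos(t) \geq 1$ for $t$ near $0$, this yields the sandwich $0 \leq 1 - \cos(t) \leq \sin^2(t)$ there. Now $\lim_{t \to 0}\sin^2(t) = 0$ by Theorem~\ref{thm: multiplicativity_of_lim} applied to $\lim_{t \to 0}\sin(t) = 0$ against itself, and $\lim_{t \to 0}0 = 0$ is immediate (e.g. Theorem~\ref{thm: limit_of_scalar_mult} with $k = 0$), so the squeeze theorem (Theorem~\ref{thm: squeeze_theorem}) gives $\lim_{t \to 0}\paren{1 - \cos(t)} = 0$, whence $\lim_{t \to 0}\paren{\cos(t) - 1} = 0$ by one more application of Theorem~\ref{thm: limit_of_scalar_mult} with $k = -1$. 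A fully equivalent route replaces the identity above by the half-angle form $\cos(t) - 1 = -2\sin^2(t/2)$ together with the geometric bound $\lvert \sin(t/2)\rvert \leq \lvert t \rvert / 2$.

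Finally I would reassemble: by Theorems~\ref{thm: limit_of_scalar_mult} and~\ref{thm: limit_of_sum} the decomposition above has limit $\sin(c)\cdot 0 + \cos(c)\cdot 0 = 0$, so $\lim_{t \to 0}\paren{\sin(c+t) - \sin(c)} = 0$, and Theorem~\ref{thm: novel_epsilon_delta} translates this back to $\lim_{x \to c}\sin(x) = \sin(c)$. Note that beyond Theorem~\ref{thm: cty_of_sin_at_zero} the argument is purely trigonometric and algebraic, with no fresh $\eeps$-$\delta$ manipulation, which is precisely the payoff the reformulation is meant to deliver.
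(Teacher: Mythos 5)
Your proof is correct and follows the same overall strategy as the paper's: reduce to $\lim_{t \to 0}(\sin(c+t)-\sin(c))=0$ via Theorem~\ref{thm: novel_epsilon_delta}, expand with the angle-addition formula, and hand the two resulting pieces to the limit theorems of Section~\ref{sec: limit_theorems}. The only place you genuinely diverge is the treatment of the $1-\cos(t)$ term. The paper writes $1-\cos(t)=2\sin^2(t/2)$ and concludes $\lim_{t\to 0}\sin^2(t/2)=0^2$ by multiplicativity, which silently relies on $\lim_{t\to 0}\sin(t/2)=0$ --- a rescaled variant of Theorem~\ref{thm: cty_of_sin_at_zero} that is not literally covered by any stated limit theorem. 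Your primary route, namely $0 \leq 1-\cos(t)=\sin^2(t)/(1+\cos(t)) \leq \sin^2(t)$ for $t$ near $0$, followed by Theorem~\ref{thm: multiplicativity_of_lim} and the squeeze theorem (Theorem~\ref{thm: squeeze_theorem}), only ever evaluates $\sin$ at the argument $t$ itself, so it stays strictly inside the toolbox the paper has already established; it is marginally longer but more self-contained. Your fallback half-angle route is essentially the paper's argument verbatim.
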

\begin{proof}[Proof of Corollary~\ref{cor: cty_of_sin}]
	For general $c \in \RR$, trigonometric formulae yield
	\begin{align*}
		\sin(c+t) -\sin(c)
		&= \sin(c) \cos(t) + \cos(c) \sin(t) - \sin(c)\\
		&= -\sin(c) (1-\cos(t)) + \cos(c) \sin(t)\\
		&= -\sin(c) \cdot 2 \sin^2(t/2) + \cos(c) \sin(t).
	\end{align*}
	Applying $\lim_{t \to 0}(\cdot)$ on both sides, and applying the linearity and multiplicativity of $\lim_{t \to 0}(\cdot)$,
	\begin{align*}
		\lim_{t \to 0}(\sin(c+t) -\sin(c))
		&= \lim_{t \to 0}(-{\sin(c)} \cdot 2 \sin^2(t/2) + \cos(c) \sin(t))\\
		&= -2\sin(c) \cdot \lim_{t \to 0}  \sin^2(t/2) + \cos(c) \cdot \lim_{t \to 0} \sin(t)\\
		&= -2\sin(c) \cdot 0^2 + \cos(c) \cdot 0 = 0.
	\end{align*}
\end{proof}
\begin{corollary}\label{cor: limit_of_quotient}
	Let $f,g$ be real-valued functions. Suppose $\lim_{x \to c}f(x) = L$, $\lim_{x \to c} g(x) = M \neq 0$. Then $\lim_{x \to c} (f/g)(x) = L/M$.
\end{corollary}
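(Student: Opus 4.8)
The plan is to avoid any new $\epsilon$-$\delta$ work and instead reduce the quotient rule to results already in hand, exactly as the rest of Section~\ref{sec: gen_limit_theorems} does. The organising observation is the factorisation
$$\frac{f(x)}{g(x)} = f(x)\cdot\frac{1}{M}\cdot\frac{1}{g(x)/M},$$
which isolates the one delicate ingredient --- a reciprocal of a function whose limit is \emph{exactly} $1$ --- so that Corollary~\ref{cor: general_limit_point} becomes directly applicable.

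First I would record the standard consequence of the definition that, since $\lim_{x \to c} g(x) = M \neq 0$, the function $g$ is nonzero for all $x$ in some punctured neighbourhood of $c$ (apply the $\epsilon$-$\delta$ condition with $\epsilon = |M|/2$). This guarantees that $g/M$, $1/g$, and $f/g$ are genuinely defined near $c$, so that the limit statements about them make sense; this bookkeeping is the only point that needs care.

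Next I would chain the known rules. Applying the scalar-multiplication clause of Theorem~\ref{thm: generalise} with $k = 1/M$ gives $\lim_{x \to c}\bigl(g(x)/M\bigr) = M/M = 1$. Feeding this into the reciprocal clause of Corollary~\ref{cor: general_limit_point} (namely $\lim_{x \to c} 1/h(x) = 1$ whenever $\lim_{x \to c} h(x) = 1$, taken with $h = g/M$) yields $\lim_{x \to c} \dfrac{1}{g(x)/M} = 1$. A second use of the scalar-multiplication clause, now with constant $1/M$, gives $\lim_{x \to c}\dfrac{1}{g(x)} = \dfrac{1}{M}$. Finally, the multiplicativity clause of Theorem~\ref{thm: generalise} applied to $f$ and $1/g$ gives
$$\lim_{x \to c}\frac{f(x)}{g(x)} = \lim_{x \to c}\left(f(x)\cdot\frac{1}{g(x)}\right) = L\cdot\frac{1}{M} = \frac{L}{M}.$$

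I expect no genuine obstacle beyond the domain bookkeeping in the first step: once $1/g$, $1/(g/M)$ and $f/g$ are known to be defined on a punctured neighbourhood of $c$, the argument is a purely mechanical substitution into Theorem~\ref{thm: generalise} and Corollary~\ref{cor: general_limit_point}, with no residual $\epsilon$-$\delta$ estimate --- consistent with the paper's stance that the analytic heavy lifting was already discharged in Section~\ref{sec: limit_theorems}.
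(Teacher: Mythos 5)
Your proposal is correct and follows essentially the same route as the paper's own proof: normalise $g$ by $1/M$ via the scalar-multiplication clause of Theorem~\ref{thm: generalise}, apply the reciprocal clause of Corollary~\ref{cor: general_limit_point} to $g/M$, rescale to get $\lim_{x \to c} 1/g(x) = 1/M$, and finish with multiplicativity. Your added remark that $g$ is nonzero on a punctured neighbourhood of $c$ (so that $1/g$ and $f/g$ are defined there) is a sensible piece of bookkeeping that the paper leaves implicit, but it does not change the argument.
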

\begin{proof}[Proof of Corollary~\ref{cor: limit_of_quotient}]
	By Theorem~\ref{thm: generalise},
	$$\lim_{x \to c} \frac {g(x)}M = \frac 1M \cdot M = 1.$$
	By Corollary~\ref{cor: general_limit_point},
	$$\lim_{x \to c} \frac{M}{g(x)} = 1.$$
	By Theorem~\ref{thm: generalise} again,
	$$\lim_{x \to c} \frac{1}{g(x)} = \lim_{x \to c} \frac 1M \cdot \frac{M}{g(x)} = \frac 1M \cdot 1 = \frac 1M.$$
	By Theorem~\ref{thm: generalise},
	$$\lim_{x \to c} (f/g)(x)=\lim_{x \to c} \frac{f(x)}{g(x)} = \lim_{x \to c} f(x) \cdot \frac{1}{g(x)} = L \cdot \frac 1M = \frac LM.$$
\end{proof}
\section{Reformulating Infinity-Related Limits}
The conventional definition of limits at infinity is as follows.
\begin{definition}\label{def: limits_at_infinity}
	Let $A \subseteq \RR$ be a subset. For any function $f : A \to \RR$, we write
	$$\lim_{x \to \infty} f(x) = L \quad \Iff \quad \forall \eeps > 0 \quad \exists M > 0:\quad x > M \quad \To \quad |f(x)-L|<\eeps.$$
\end{definition}
We remark its equivalent form and demonstrate the usefulness of the latter version.
\begin{theorem}\label{thm: equiv_limits_at_infinity}
	Let $A \subseteq \RR$ be a subset. Then
	$$\lim_{x \to \infty} f(x) = L \quad \Iff \quad \lim_{t \to 0^+} (f(1/t) - L) = 0.$$
\end{theorem}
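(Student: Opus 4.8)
The plan is to mimic the proof of Theorem~\ref{thm: novel_epsilon_delta}, replacing the affine substitution $x = c+t$ by the reciprocal substitution $x = 1/t$. The only genuine difference is that this substitution is order-reversing and carries the ``tail'' $x > M$ to a punctured right-neighbourhood $0 < t < \delta$ of $0$, which is precisely why the one-sided limit $\lim_{t \to 0^+}$, rather than a two-sided one, appears on the right-hand side.

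First I would record the bookkeeping. For $t > 0$ set $x = 1/t$; this is a bijection from $(0,\infty)$ onto $(0,\infty)$, and for any $M > 0$ one has the equivalence $x > M \iff 0 < t < 1/M$. Writing $\delta = 1/M$ (equivalently $M = 1/\delta$) thus turns the threshold condition ``$x > M$'' into ``$0 < t < \delta$''. The output condition is untouched by the relabelling: $|f(x) - L| < \eeps \iff |f(1/t) - L| < \eeps$.

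Next I would match the two quantifier strings. Unwinding Definition~\ref{def: limits_at_infinity}, the statement $\lim_{x \to \infty} f(x) = L$ reads $\forall \eeps > 0\ \exists M > 0 :\ x > M \To |f(x) - L| < \eeps$, while the one-sided limit $\lim_{t \to 0^+}(f(1/t)-L) = 0$ reads $\forall \eeps > 0\ \exists \delta > 0 :\ 0 < t < \delta \To |f(1/t)-L| < \eeps$. By the correspondence of the previous paragraph, any witness $M$ for the left statement gives the witness $\delta = 1/M$ for the right, and conversely any $\delta$ gives $M = 1/\delta$; since the implications transported across these bijections are the same implication, the two statements are logically equivalent.

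The computation presents no real obstacle; the single point to be careful about is the domain. For the right-hand side even to be meaningful we need $1/t \in A$ for all sufficiently small $t > 0$, i.e.\ $A$ must contain arbitrarily large reals --- but this is already implicit in Definition~\ref{def: limits_at_infinity}, since otherwise ``$x > M$'' is eventually vacuous and the left-hand side degenerates. I would therefore just note this compatibility in passing, in keeping with how the paper handles domains elsewhere, rather than belabour it.
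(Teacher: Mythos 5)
Your proposal is correct and follows essentially the same route as the paper's own proof: the substitution $x = 1/t$ with $\delta = 1/M$ and the equivalence $x > M > 0 \iff 0 < t < \delta$. The extra remarks on order-reversal, the one-sidedness of the limit, and the domain condition are sound elaborations of what the paper leaves implicit.
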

\begin{proof}[Proof of Theorem~\ref{thm: equiv_limits_at_infinity}]
	Using $x = 1/t$, set $\delta = 1/M > 0$ and the proof follows the equivalence of
	$$x > M > 0 \quad \Iff \quad 0 < t < \delta.$$
\end{proof}
This allows us to derive the various limit theorems for when $x \to \infty$ in terms of the formulation in Theorem~\ref{thm: novel_epsilon_delta}, since it follows from the vanilla limit theorems at finite $c \in \RR$.
We illustrate the usefulness of the alternate formulation below.
\begin{example}\label{eg: limits_at_infinity}
	Prove that ${\displaystyle \lim_{x \to \infty} \dsfrac{2x+1}{3x-4}} = \frac 23$.
\end{example}
\begin{proof}[Proof of Example~\ref{eg: limits_at_infinity}]
	By Theorem~\ref{thm: equiv_limits_at_infinity}, we equivalently prove that ${\displaystyle \lim_{t \to 0^+} \paren{\dsfrac{2(1/t)+1}{3(1/t)-4}-\frac 23}} = 0$.
	By algebra,
	\begin{align*}
		\frac{2(1/t)+1}{3(1/t)-4} - \frac 23
		&= \cdots = \frac{7t}{3(3-4t)}.
	\end{align*}
	Taking absolute signs,
	\begin{align*}
		\parenl{\frac{2(1/t)+1}{3(1/t)-4} - \frac 23}
		&= \frac 73 \cdot \frac 1{|3-4t|} \cdot |t|.
	\end{align*}
	Choose $\delta = \min\{3/8, 9\eeps/14\}$. Then for $0 < t < \delta$,
	$$0 < \frac 32 = 3 - 4 \cdot \frac 38 =3-4\delta <3-4t < 3 \quad \To \quad \frac 1{|3-4t|} = \frac 1{3-4t} < \frac 23.$$
	Hence,
	\begin{align*}
		\parenl{\frac{2(1/t)+1}{3(1/t)-4} - \frac 23}
		&= \frac 73 \cdot \frac 1{|3-4t|} \cdot |t|\\
		&< \frac 73 \cdot \frac 23 \cdot \frac 9{14} \eeps = \eeps.
	\end{align*}
\end{proof}

\section{Further Applications}

Since the \vocab{derivative} $f'$ of a function $f$ is essentially defined as a limit, the above-mentioned techniques apply, since
$$f'(c) = \lim_{x \to c} \frac{f(x)-f(c)}{x-c} \quad \Iff \quad \lim_{t \to 0} \paren{\frac{f(c+t)-f(c) - f'(c)t}{t}} = 0.$$
In fact, one uses the equivalent formulation to define \vocab{Frech\'et} derivatives, in the setting of normed spaces.
\begin{definition}
	Let $V$ be a vector space over $\RR$. A function $\normm$ is a \vocab{norm} if it satisfies the four conditions:
	\begin{enumerate}
		\item \vocab{Non-negativity} For any $v \in V$, $\norm{v} \geq 0$.
		\item \vocab{Non-degeneracy} For any $v \in V$, $\norm{v} = 0 \iff v = 0$.
		\item \vocab{Homogeneity} For any $v \in V, c \in \KK$, $\norm{cv} = |c| \norm{v}$.
		\item \vocab{Triangle inequality} For any $u,v \in V$, $\norm{u+v} \leq \norm{u} + \norm{v}$.
	\end{enumerate}
	For instance, the absolute value function $\normm := |\cdot|$ is a norm on $X:=\RR$.
\end{definition}
\begin{definition}
	Let $f : V \to W$ be a map between normed spaces, denoting the norms in $V,W$ by $\normm_V,\normm_W$ respectively. For any open set $U \subset V$, the \vocab{Frech\'et} derivative at $v \in U$ is the unique map $f'(v) : V \to W$ such that
	$$\lim_{\norm{u}_V \to 0} \frac{\norm{f(v+u) - f(v) - (f'(v))(u)}_W}{\norm{u}_V} = 0.$$
\end{definition}
This limit notion also generalise nicely to topological vector spaces, which are itself generalisations of normed spaces.
\begin{definition}
	A \vocab{topological vector space} is a vector space $V$ over a topological field $\KK$ equipped with a topology $\ccal T$ with the property that the addition and scalar multiplication maps $$+ : V \times V \to V, (u,v) \mapsto u+v, \quad \cdot : \KK \times V \to V, (c,v) \mapsto c \cdot v$$
	respectively are continuous. 	
\end{definition}
	Let $f : X \to Y$ be a map between topological vector spaces. 
	Since topological vector spaces are completely regular \cite{Robdera+2022+246+254}, they are Hausdorff, and limits therein are unique. We can then define limits as follows.
\begin{definition}
Denote $\lim_{t \to 0} f(t) = 0$ if for any neighbourhood $V$ of $0 \in Y$, there exists a neighbourhood $U$ of $0 \in X$ such that
	$$f(U) \subseteq V.$$
	Finally, we generalise to the limit at $c \in X$ using continuity of addition and scalar multiplication by implementing
	$$\lim_{x \to c}f(x) = L \quad \iff \quad \lim_{t \to 0}(f(c+t) - L) = 0$$
	where we verify the right-hand side via the function $g = f(c + \cdot) - L$.
\end{definition}
We perform a quick check that this definition agrees with the conventional limit definition.
\begin{theorem}
	The following equivalence holds:
	$$\lim_{x \to c}f(x) = L \quad \iff \quad \text{$\forall_{V \ni L} \quad \exists_{U \ni c}:\quad f(U) \subseteq V$}.$$
\end{theorem}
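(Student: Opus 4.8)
The plan is to reduce both sides to statements phrased purely in terms of the neighbourhood filter of the origin, and then observe that they coincide once we know that the topology of a topological vector space is translation-invariant. There are no $\eeps$'s or $\delta$'s here: the argument is a bookkeeping exercise in quantifiers, with the only genuine input being that translations are homeomorphisms.

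First I would record that standard input. For a fixed $a \in X$, the translation map $\tau_a : X \to X$, $x \mapsto x + a$, is continuous, being the composition of $x \mapsto (x,a)$ with the (continuous) addition map $+ : X \times X \to X$; its inverse is $\tau_{-a}$, which is continuous for the same reason, so $\tau_a$ is a homeomorphism. Consequently a set $U \subseteq X$ is a neighbourhood of $c$ if and only if $U - c$ is a neighbourhood of $0 \in X$; equivalently, the neighbourhoods of $c$ are exactly the sets $c + U_0$ as $U_0$ ranges over the neighbourhoods of $0 \in X$, and likewise the neighbourhoods of $L$ in $Y$ are exactly the sets $L + W$ as $W$ ranges over the neighbourhoods of $0 \in Y$. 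Next I would unfold the left-hand side: by the definition given just above the theorem, $\lim_{x \to c} f(x) = L$ means $\lim_{t \to 0} g(t) = 0$ where $g = f(c + \cdot) - L$, and this in turn means that for every neighbourhood $W$ of $0 \in Y$ there is a neighbourhood $U_0$ of $0 \in X$ with $g(U_0) \subseteq W$, i.e. $f(c + U_0) - L \subseteq W$.

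Finally I would match the two quantifier blocks. Given a neighbourhood $V$ of $L$, set $W := V - L$ (a neighbourhood of $0 \in Y$), and given a neighbourhood $U_0$ of $0 \in X$, set $U := c + U_0$ (a neighbourhood of $c$); by the translation fact both assignments $V \mapsto W$ and $U_0 \mapsto U$ are bijections between the relevant neighbourhood filters, and $f(c + U_0) - L \subseteq W$ is literally the same containment as $f(U) \subseteq V$. Therefore
$$\big(\forall\, W \ni 0\ \exists\, U_0 \ni 0 : f(c+U_0) - L \subseteq W\big) \iff \big(\forall\, V \ni L\ \exists\, U \ni c : f(U) \subseteq V\big),$$
and the left-hand statement is exactly $\lim_{x \to c} f(x) = L$ by the previous paragraph, which is the claimed equivalence. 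The only place where anything must actually be verified — and hence the ``hard part,'' though it is quite mild — is the translation-invariance step: one needs continuity of addition in both $X$ and $Y$ to guarantee that neighbourhood bases at $c$ and at $0$ (respectively at $L$ and at $0$) correspond bijectively under translation. Everything downstream of that is a mechanical rewriting.
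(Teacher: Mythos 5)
Your proof is correct and takes essentially the same route as the paper's: both rest on translation invariance of the topology (translations being homeomorphisms thanks to continuity of addition) to transport neighbourhoods of $c$ and $L$ to neighbourhoods of $0$ via $U \mapsto U - c$ and $V \mapsto V - L$. You merely package the two implications as a single bijective matching of neighbourhood filters, and you are somewhat more explicit than the paper about why translations are homeomorphisms.
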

\begin{proof}
	For $(\To)$, fix a neighbourhood $V$ of $L$. By translation invariance, the set $V-L := \{v - L : v \in V\}$ is a neighbourhood of $0$. By the definition of the left-hand side, there exists a neighbourhood $U$ of $0$ such that
	$$g(U) \subseteq V- L.$$
	The set $U + c := \{u + c : u \in U\}$ is a neighbourhood of $c$. Then one checks by linear algebra that
	$$f(U+c) = g(U) + L \in V,$$
	as required.
	For $(\From)$, fix a neighbourhood $V$ of $0 \in Y$. The set $V + L$ is a neighbourhood of $L$. By the right-hand side, there exists a neighbourhood $U$ of $c$ such that
	$$f(U) \subseteq V+L.$$
	One can then check that $U - c$ is a neighbourhood of $0 \in X$, and that
	$$g(U-c) = f(U) - L \subseteq V,$$
	as required.
\end{proof}
\section{Conclusion}
The simple modification of defining limits in terms of the origin for vector spaces massively simplifies computational complications for undergraduate students and novices in real analysis, who are acclimatising to the required rigour. Furthermore, the core notions of neighbourhoods are sufficiently general to account for abstract topological vector spaces, which inevitably simplifies further computations. We hope this resource will edify university students of any mastery of real analysis and topology in their mathematical journey.

\printbibliography

\section{Appendix}
\label{sec: appendix}

In this section, we prove theorems whose proofs are supplementary to the main material. These are well-establish real analytic results in the mathematical community that are included for the sake of completeness.
\begin{proof}[Proof of Theorem~\ref{thm: limit_of_scalar_mult}]
	We want to show that 
	$$\lim_{t \to 0} (kf)(t) = 0.$$
	Suppose $k \neq 0$. By algebra,
	\begin{align*}
		|(kf)(t)| &= |k\cdot f(t)| \leq |k| |f(t)|.
	\end{align*}
	Fix $\eeps > 0$.
	For the real number $\eeps/|k| > 0$, since $\lim_{t \to 0}f(t)=0$, there exists some $\delta > 0$ such that
	$$0 < |t| < \delta \quad \To \quad |f(t)| < \frac{\eeps}{|k|}.$$
	Then for $0<|t| <\delta$,
	\begin{align*}
		|(kf)(t)|
		& = |k| |f(t)| \leq |k| \cdot \frac{\eeps}{|k|} = \eeps.
	\end{align*}
\end{proof}
\begin{proof}[Proof of Theorem~\ref{thm: polynomial_cty}]
	By Theorem~\ref{thm: novel_epsilon_delta} it suffices to show that $\lim_{t \to 0} (p(c+t) - p(c))=0$. Equivalently, we want to prove the quantified statement
	$$\forall \eeps > 0 \quad \exists \delta > 0: \quad 0<|t| < \delta \quad \To \quad |p(c+t) -p(c)| < \eeps.$$
	Fix $\eeps > 0$. Choose $\delta = \square$, where $\square > 0$ is to be determined from the analysis of our calculations. Assume $0 < |t| < \delta$. Then
	\begin{align*}
	p(c+t) - p(c)
	&= \sum_{k=0}^n a_k {(c+t)}^k - \sum_{k=0}^n a_k {c}^k\\
	&= \sum_{k=0}^n a_k ({(c+t)}^k - c^k).
	\end{align*}
	By the vanilla binomial theorem,
	$${(c+t)}^k - c^k = \sum_{j=0}^k {k \choose j} c^{k-j} t^j - c^k = \sum_{j=1}^k {k \choose j} c^{k-j} t^j.$$
	This simplifies $p(c+t) - p(c)$ to
	\begin{align*}
		p(c+t) - p(c)
		&= \sum_{k=0}^n a_k \paren{\sum_{j=1}^k {k \choose j} c^{k-j} t^j}
		= \sum_{k=0}^n \sum_{j=1}^k a_k {k \choose j} c^{k-j} t^j\\
		&= \sum_{j=1}^n \sum_{k=j}^n a_k {k \choose j} c^{k-j} t^j
		= \sum_{j=1}^n b_j t^j,
	\end{align*}
	where $$b_j = \sum_{k=j}^n a_k {k \choose j} c^{k-j}$$
	can be computed with relatively simple algebraic manipulation.
	Applying $|\cdot|$ and using the triangle inequality, we can bound the output error by
	\begin{align*}
		|p(c+t)-p(c)|
		&= \parenl{\sum_{j=1}^n b_j t^j}
		\leq \sum_{j=1}^n |b_j| {|t|}^j
		< \sum_{j=1}^n |b_j| {\square}^j.
	\end{align*}
	There are two ways to determine the choices for $\square$. We illustrate them below.
	\begin{enumerate}[label=\qrom]
		\item To allow the right-hand side to be bounded above by $\eeps$, simply set each $|b_j| \square^j \leq \eeps/n$. That is, stipulate
	$$\square \leq \paren{\frac{\eeps}{n|b_j|}}^{\frac 1j}, \quad b_j \neq 0,$$
	so that by the triangle inequality, 
	$$|p(c+t) - p(c)| < \sum_{j=1}^n |b_j| \square^j \leq \sum_{j=1}^n \frac{\eeps}{n} = \eeps.$$
	Thus, the choice $\square = \min\sett{{(\eeps/(n|b_j|))}^{1/j} : b_j \neq 0}$ works.
		\item Alternatively, one could factor the $|t|$ to get
		$$|p(c+t)-p(c)| \leq |t| \sum_{j=1}^n |b_j| {|t|}^{j-1} < \square \sum_{j=1}^n |b_j| {\square}^{j-1}.$$
		By first stipulating $\square \leq 1$, one could upper bound the sum by
		$$\sum_{j=1}^n |b_j| {\square}^{j-1} \leq \sum_{j=1}^n |b_j|,$$
		which is positive since $p(x)$ is a nonzero polynomial. Further stipulating $\square \leq \eeps / \sum_{j=1}^n |b_j|$, one can obtain
		$$|p(c+t)-p(c)| < \square \sum_{j=1}^n |b_j| {\square}^{j-1} \leq  \paren{\frac{\eeps}{\sum_{j=1}^n |b_j|}} \cdot \sum_{j=1}^n |b_j| = \eeps.$$
		Thus, the choice $\square = \min \sett{1,  \eeps / \sum_{j=1}^n |b_j|}$ works.
	\end{enumerate}
\end{proof}
\begin{lemma}\label{thm: continuity_positivity}
	For any nonzero polynomial $p(x)$ with $p(c) \neq 0$, there exists $r > 0$ and $\delta > 0$ such that for $0 < |t| < \delta$, $|p(c+t)| > r$
\end{lemma}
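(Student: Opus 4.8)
The plan is to invoke the continuity of polynomials established in Theorem~\ref{thm: polynomial_cty} and then apply the reverse triangle inequality to conclude that $|p(c+t)|$ cannot drop below half of $|p(c)|$ once $t$ is small enough. Concretely, by Theorem~\ref{thm: polynomial_cty} (or equivalently its centre-at-zero reformulation via Theorem~\ref{thm: novel_epsilon_delta}) we have $\lim_{t \to 0}(p(c+t) - p(c)) = 0$.

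The key steps, in order, are as follows. First, set $r := \tfrac{1}{2}|p(c)|$, which is strictly positive precisely because of the hypothesis $p(c) \neq 0$. Next, apply the limit statement $\lim_{t \to 0}(p(c+t)-p(c)) = 0$ with output threshold $\eeps := r > 0$ to obtain an input threshold $\delta > 0$ such that $0 < |t| < \delta$ implies $|p(c+t) - p(c)| < r$. Finally, for such $t$, the reverse triangle inequality gives
$$|p(c+t)| \geq |p(c)| - |p(c+t) - p(c)| > |p(c)| - r = |p(c)| - \tfrac{1}{2}|p(c)| = \tfrac{1}{2}|p(c)| = r,$$
which is exactly the claimed bound.

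I do not expect any genuine obstacle here: this is the standard ``a continuous function nonzero at a point stays bounded away from zero nearby'' argument, and all the analytic work has already been done in Theorem~\ref{thm: polynomial_cty}. The only point requiring a moment's care is the bookkeeping of constants — choosing the output threshold $\eeps$ to be $\tfrac{1}{2}|p(c)|$ rather than, say, $|p(c)|$ — so that the reverse triangle inequality yields a strict, explicitly positive lower bound $r$.
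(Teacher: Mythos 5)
Your proposal is correct and follows essentially the same route as the paper: both invoke continuity of $p$ at $c$ with output threshold $\eeps = \tfrac{1}{2}|p(c)|$ and take $r = \tfrac{1}{2}|p(c)|$. The only difference is cosmetic — you close with the reverse triangle inequality in one line, whereas the paper unpacks $|p(c+t)-p(c)| < \tfrac{1}{2}|p(c)|$ into a two-sided bound and splits into the cases $p(c)>0$ and $p(c)<0$; your version is the tidier of the two.
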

\begin{proof}
	Since $\lim_{t \to c} p(t) = p(c)$, equivalently, $\lim_{t \to 0} (p(c+t) - p(c)) = 0$. Since $p(c) \neq 0$, $|p(c)|/2 > 0$. Then for $\eeps = |p(c)|/2 > 0$, there exists some $\delta > 0$ such that
	$$0 < |t| < \delta \quad \To \quad |p(c+t) - p(c)| < \frac{|p(c)|}{2}.$$
	Expanding the inequality,
	$$p(c) - \frac{|p(c)|}{2}< p(c+t) < p(c) + \frac{|p(c)|}{2}.$$
	We consider the cases $p(c) > 0$ and $p(c) < 0$.
	\begin{enumerate}[label=\qrom]
		\item If $p(c) > 0$, then the first inequality yields
		$$p(c+t) > p(c) - \frac{|p(c)|}{2} = |p(c)| - \frac{|p(c)|}{2} = \frac{|p(c)|}{2} =: r > 0,$$
		which gives
		$|p(c+t)| = p(c+t) > r$.
		\item If $p(c) < 0$, then the second inequality yields
		$$p(c+t) < p(c) + \frac{|p(c)|}{2} = -|p(c)| + \frac{p(c)}{2} = -\frac{|p(c)|}{2} =: -r < 0,$$
		which gives
		$|p(c+t)| = -p(c+t) > -(-r) = r$.
	\end{enumerate}
\end{proof}
\begin{proof}[Proof of Theorem~\ref{thm: rational_functions}]
	Equivalently, we want to establish that
	$$\lim_{t \to 0} \paren{\dsfrac{p(c+t)}{q(c+t)} - \dsfrac{p(c)}{q(c)}} = 0.$$
	By algebra,
	\begin{align*}
		\parenl{\dsfrac{p(c+t)}{q(c+t)} - \dsfrac{p(c)}{q(c)}}
		&= \parenl{\frac{q(c)p(c+t) - p(c)q(c+t)}{q(c)q(c+t)}}\\
		&= \parenl{\frac{q(c)p(c+t) - q(c)p(c) + q(c)p(c) - p(c)q(c+t)}{q(c)q(c+t)}}\\
		&\leq \frac{|q(c)||p(c+t)-p(c)|}{|q(c)||q(c+t)|} + \frac{|p(c)||q(c+t)-q(c)|}{|q(c)||q(c+t)|}\\
		&\leq \frac{1}{|q(c+t)|} \cdot |p(c+t)-p(c)| + \frac{|p(c)|}{|q(c)|} \cdot \frac{1}{|q(c+t)|} \cdot |q(c+t)-q(c)|.
	\end{align*}
	Fix $\eeps > 0$. Choose $\delta = \square > 0$ to be determined. Suppose $0 < |t| < \delta$. The key is to stipulate $\square \leq \delta_1$ so that
	$$\dsfrac{1}{|q(c+t)|} > r > 0$$ by Lemma~\ref{thm: continuity_positivity}. In practice, trial-and-error is sufficient to determine $\delta_1$ (see Example~\ref{eg: rational_limit}). This allows us to bound
	\begin{align*}
	\parenl{\dsfrac{p(c+t)}{q(c+t)} - \dsfrac{p(c)}{q(c)}}
	&\leq \frac{1}{|q(c+t)|} \cdot |p(c+t)-p(c)| + \frac{|p(c)|}{|q(c)|} \cdot \frac{1}{|q(c+t)|} \cdot |q(c+t)-q(c)|\\
	&\leq \frac 1r \cdot \paren{|p(c+t)-p(c)| + \frac{|p(c)|}{|q(c)|}|q(c+t)-q(c)|}
	\end{align*}
	Since $p$ is a polynomial, we can stipulate $\square \leq \delta_2$ as per Theorem~\ref{thm: polynomial_cty} so that 
	$$0 < |t| < \delta_2 \quad \To \quad |p(c+t)-p(c)| < \frac{r}{2} \cdot \eeps.$$
	Since $q$ is a polynomial, if $p(c) \neq 0$, we can stipulate $\square \leq \delta_3$ as per Theorem~\ref{thm: polynomial_cty} so that 
	$$0 < |t| < \delta_3 \quad \To \quad |q(c+t)-q(c)| < \frac{r \cdot |q(c)|}{2\cdot |p(c)|} \cdot \eeps.$$
	Thus, in the case $p(c) \neq 0$, we let $\square = \min\{\delta_1,\delta_2,\delta_3\}$, where each of the $\delta_i$'s can be computed fairly easily, so that
	\begin{align*}
	\parenl{\dsfrac{p(c+t)}{q(c+t)} - \dsfrac{p(c)}{q(c)}}
	&\leq \frac 1r \cdot \paren{|p(c+t)-p(c)| + \frac{|p(c)|}{|q(c)|}|q(c+t)-q(c)|}\\
	&< \frac 1r \cdot \paren{\frac r2 \cdot \eeps + \frac{|p(c)|}{|q(c)|} \cdot \frac{r \cdot |q(c)|}{2\cdot |p(c)|} \cdot \eeps} = \eeps.
	\end{align*}
\end{proof}

\end{document}